\newtheorem{theorem}{Theorem}[section]
\newtheorem{definition}[theorem]{Definition}
\newtheorem{corollary}[theorem]{Corollary}
\newtheorem{example}[theorem]{Example}
\newtheorem{lemma}[theorem]{Lemma}
\renewenvironment{proof}{
\noindent{\bf Proof.}\rm} {\mbox{}\hfill\rule{0.5em}{0.809em}\par}
\newcommand{\na}{GDN algebra}
\newcommand{\cca}{special GDN-Poisson admissible algebra}
\newcommand{\npa}{GDN-Poisson algebra}
\newcommand{\npas}{GDN-Poisson algebras}
\newcommand{\npt}{GDN-Poisson tableau}
\newcommand{\nptx}{GDN-Poisson tableaux}
\newcommand{\nt}{GDN tableau}
\newcommand{\ma}{$\mathcal{A}$ }
\begin{document}

\title{On free Gelfand-Dorfman-Novikov-Poisson algebras and  a PBW theorem\footnote{Supported by the NNSF of China (11171118, 11571121).}}

\author{
L. A. Bokut\footnote {Supported by Russian Science Foundation (project
14-21-00065).} \\
{\small \ School of Mathematical Sciences, South China Normal
University}\\
{\small Guangzhou 510631, P. R. China}\\
{\small Sobolev Institute of mathematics, Novosibirsk, 630090}\\
 {\small Novosibirsk State University, Novosibirsk 630090, Russia}\\
{\small  bokut@math.nsc.ru}\\
\\
 Yuqun
Chen\footnote {Corresponding author.} \  and Zerui Zhang\footnote{Supported by the Innovation Project of Graduate School of South China Normal University.
}\\
{\small \ School of Mathematical Sciences, South China Normal
University}\\
{\small Guangzhou 510631, P. R. China}\\
{\small yqchen@scnu.edu.cn}\\
{\small 295841340@qq.com}}

\date{}

\maketitle

\noindent{\bf Abstract:} In 1997, X. Xu \cite{Xiaoping Xu 2, Xiaoping Xu Poisson} invented a concept of Novikov-Poisson algebras
(we call them Gelfand-Dorfman-Novikov-Poisson  (GDN-Poisson) algebras).
We construct a linear basis of a free GDN-Poisson algebra. We define
a notion of a special GDN-Poisson admissible algebra, based on X. Xu's definition
and an S.I. Gelfand's observation (see \cite{Gelfand}). It is a differential algebra with two commutative
associative products and some extra identities. We prove that any GDN-Poisson algebra
is embeddable into its universal enveloping special GDN-Poisson admissible algebra.
Also we prove that any GDN-Poisson algebra with the  identity $x\circ(y\cdot z)=(x\circ y )\cdot z
+(x\circ z) \cdot y$
is isomorphic to a commutative associative differential algebra.

\noindent{\bf Key words: }    Poincar\'{e}-Birkhoff-Witt theorem;
 \npa;  \cca.

\ \

\noindent {\bf AMS} Mathematics Subject Classification (2000): 16S10, 17A30, 17A50.



\section{Introduction}\label{Intro}
I.M. Gelfand and I.Ya. Dorfman \cite{Gelfand}, in connection with Hamiltonian operators in the formal calculus of variations, invented a new class of nonassociative algebras, defined by identities
\begin{eqnarray}\label{eq00}
\nonumber   x\circ(y\circ z)-(x\circ y)\circ z
&=&y\circ(x\circ z)-(y\circ x)\circ z\ \ (\mbox{left symmetry}),\\
      (x\circ y)\circ z&=&(x\circ z)\circ y  \  \  (\mbox{right commutativity})
\end{eqnarray}
S.I. Gelfand  (see \cite{Gelfand}) introduced an important subclass of new algebras (\ref{eq00}).
Namely, any associative commutative differential algebra $(C,\cdot)$
is an algebra (\ref{eq00}) under a new multiplication
$
       x\circ y= x(Dy).
$
Independently, S.P. Novikov (\cite{Novikov 1,Novikov 2}) invented the same algebras in connection with linear Poisson brackets of hydrodynamic type. J.M. Osborn \cite{ Osborn 1,   Osborn 2,   Osborn 3, Osborn 4}  gave
   the name Novikov algebra (he knew both papers \cite{Novikov 1,Gelfand}) to this kind of algebras and  began
   to classify simple  Novikov algebras
      as well as irreducible
    modules.

Considering the contributions of Gelfand and Dorfman to Novikov algebras, in this paper we call
Novikov algebras as Gelfand-Dorfman-Novikov algebras (GDN algebras for short),  Novikov-Poisson algebras as Gelfand-Dorfman-Novikov-Poisson  algebras (GDN-Poisson algebras for short).

  As it was pointed out in \cite{Novikov 2},     E.I. Zelmanov answered to a Novikov's question about
  simple finite dimensional GDN algebras over a field of characteristic $0$.
  He proved that there are no such algebras,   see  \cite{Zelmanov}.
  V.T. Filippov \cite{Filippov 1} found first examples of simple infinite dimensional GDN algebras
   of characteristic $p\geq 0$ and simple finite dimensional GDN algebras of characteristic $p> 0$.   J.M. Osborn and E.I. Zelmanov \cite{Osborn 5} classified simple GDN algebras $A$ over an algebraically closed field of
characteristic 0 with a maximal subalgebra $H$ such that $A /H$ has a finite dimensional irreducible $H$-submodule (V. Gullimen's type of condition)  (up to isomorphisms of B. Weisfeiler associated graded algebras).  A. Dzhumadil'daev and  C. L\"{o}fwall \cite{trees} constructed a linear basis of a free
GDN algebra using trees and a free commutative associative differential algebra. There were also  quite a few papers on the structure theory
(for example,   X. Xu \cite{Xiaoping Xu 1, Xiaoping Xu 2, Xiaoping Xu Poisson},
  C. Bai and D. Meng  \cite{Bai 1,Bai 3},      L. Chen,   Y. Niu and D. Meng \cite {Chen ly},
   D. Burde and K. Dekimpe \cite{Burde 3})
and combinatorial theory of GDN algebras,   and irreducible modules over GDN algebras,
 with applications to mathematics and mathematical physics.

In \cite{Xiaoping Xu 2, Xiaoping Xu Poisson, Xiaoping Xu conformal},
X. Xu introduced a concept of
GDN-Poisson algebras
in order to study the tensor theory of GDN algebras.
Then he classified GDN-Poisson algebras whose GDN algebras are simple with
an idempotent element. Moreover, he showed that a class of simple GDN algebras without
idempotent elements can be constructed through GDN-Poisson algebras. Certain
new simple Lie superalgebras induced by GDN-Poisson algebras are also introduced by him.
X. Xu  connected certain GDN-Poisson algebras with Hamiltonian superoperators
and proved that they induce Lie superalgebras, which are
natural generalizations of the super-Virasoro algebras in special cases.

Since
 then, there have been several papers on GDN-Poisson algebras, see \cite{Poisson Jordan, Bai poisson}.
There are also papers about embedding on GDN-Poisson algebras, for example \cite{Poisson embedding}; papers on GDN-Poisson algebras
and associative commutative derivation algebras \cite{Poisson Derivation}.

The paper is organized as follows.
In section 2 we first introduce the concept of a \cca\  and then construct a linear basis of a free \npa\ over an arbitrary  field.
In section 3 we  prove a Poincar\'{e}-Birkhoff-Witt (PBW for short) theorem: any \npa\ is embeddable into its universal enveloping \cca. In section 4 we show  that any \npa\ (with unit  $e$ with respective
 to $\cdot$) satisfying the identity $x\circ(y\cdot z)=(x\circ y )\cdot z
+(x\circ z) \cdot y$ is isomorphic to a commutative associative differential algebra both as \npa\ and as commutative associative differential algebra.

\section{Free \npas}\label{section free np}

A \textit{\na}  (\cite{Novikov 1}) $\mathcal{A}$ is a vector space with a bilinear  operation $\circ$ satisfying the two identities
$$
x\circ(y\circ z)-(x\circ y)\circ z
=y\circ(x\circ z)-(y\circ x)\circ z\ \ (\mbox{left symmetry}),
$$
$$
(x\circ y)\circ z=(x\circ z)\circ y  \  \  (\mbox{right commutativity}).
$$

A \textit{\npa}  (\cite{Xiaoping Xu 2}) $\mathcal{A}$ is
a vector space
with two bilinear operations $``\cdot"$ and $``\circ"$ such that $(\mathcal{A}, \cdot)$
forms a commutative associative  algebra
and $(\mathcal{A}, \circ)$ forms a \na\ with the compatibility conditions:
$$
(x\cdot y)\circ z= x\cdot (y\circ z),
$$
$$
(x\circ y)\cdot z-x\circ (y\cdot z)=(y\circ x)\cdot z-y\circ (x\cdot z), \  x, y, z\in \mathcal{A}.
$$

Throughout the paper, we only consider \npas\
with  unit $e$ with respect to $\cdot$.

As usual, let  $F(\Omega, X)$ be a free $\Omega$-algebra generated by $X$, where  $\Omega$ is a given set of operator symbols and $X$ is a non-vacuous set. The set  of words in $F(\Omega, X)$  is denoted by $W(\Omega, X)$ (for definitions of $\Omega$-algebra and words, see  \cite{jiapeng} and
Chapter 2 in \cite{jacobson 2}). For example, let $GDNP(X)$ be a free \npa\ over a field $k$ generated by $X$. Then
(i) $x, e$ are words in $GDNP(X)$ for any $x\in X$;
(ii) If $u, v$ are words in $GDNP(X)$, then
$\cdot u v$ and $\circ  uv$ are words in $GDNP(X)$.
For convenience, we will write $(u\delta v)$ instead of  $\delta u v$ if $\delta$ is a binary operator. Moreover, we always omit the leftmost ``(" and the rightmost ``)" of a word. For example, $((x_1\circ x_1)\cdot (x_1\circ e))\circ x_2$ is a word in $GDNP(X)$.

For any words $w_1,w_2,\dots, w_n\in  W(\Omega, X)$ and  binary operators $\delta_1,\ \delta_2,\ \dots, \delta_{n-1}\in \Omega$, define
$$
[w_{1}\delta_1 w_{2}\delta_2 \cdots\delta_{n-1}  w_{n}]_{_L}:=(\dots((w_{1}\delta_1 w_{2})\delta_2 w_{3})\delta_3 \cdots\delta_{n-2} w_{n-1})\delta_{n-1} w_{n} \
$$
(\mbox{left-normed bracketing}) and
$$
 [w_{1}\delta_1 w_{2}\delta_2 \cdots\delta_{n-1}  w_{n}]_{_R}:=w_{1}\delta_1( w_{2}\delta_2\cdots\delta_{n-3} (w_{n-2}\delta_{n-2} (w_{n-1}\delta_{n-1} w_{n}))\dots) \
$$(\mbox{right-normed bracketing}).

For any word $T$ in $GDNP(X)$, define
$$
|T|_{\circ}:= \mbox{the number of } ``\circ" \mbox{  that appears in }T,
$$
$$
|T|_{_X}:=\mbox{the number of elements in $X$ that appears in $T$}.
$$
For example, let $T=(x_1\circ x_1)\cdot (x_1\circ e)$. Then  $|T|_{_X}=3$, $|T|_{\circ}=2$.

 We call  $w$ a  \textit{GDN tableau} (\cite{trees})  over a well-ordered set $X$, if
$$
w=(\dots ((a_{1,r_1+1}\circ A_1 )\circ A_{2})\circ \cdots \circ  A_{n}) \mbox{ (left-normed bracketing),}
$$
    where
$
 A_{i}=(a_{i,  r_i}\circ \cdots \circ (a_{i,  3}\circ (a_{i,  2}\circ a_{i,  1}))\dots)\mbox{ (right-normed bracketing)},   \   1\leq i \leq n,\  a_{i,j}\in X, $
 satisfying that (i) $r_i\geq r_{i+1}$,
 (ii) $a_{i,  1}\geq a_{i+1,  1}$      if $r_{i}=r_{i+1},  $ $  i\geq 1 $,
(iii) $a_{1,  r_{1}+1}\geq a_{1,  r_{1}}\geq\dots \geq a_{1,  3}\geq a_{1,  2}\geq a_{2,  r_{2}}\geq\dots\geq a_{2,  3}\geq a_{2, 2} \geq a_{n,  r_{n}}\geq \dots  \geq a_{n,  3}\geq a_{n,  2} $.

Let $[X]$ be the commutative monoid generated by $X$ with unit $e$.
We call $T=u\cdot w$ a \textit{\npt} over $X$, if
$u=b_1\cdots b_m\in [X]$ (each $b_i\in X$)
 and $w=(\cdots ((a_{1,r_1+1}\circ A_1 )\circ A_{2})\circ \cdots \circ  A_{n})$
is a  \nt\ over $X\cup \{ e\}$ ($e<x$ for any $x\in X$) satisfying that
(i) $a_{n,2}\geq b_1\geq \dots \geq b_m$, (ii) if $a_{n,2}=e$, then $m=0$, i.e., $T=w$.

We call $[a_{1}\circ a_{2}\circ \dots\circ  a_{n}]_{_R}$  a \textit{row} in $GDNP(X)$ if $a_i\in [X]$, for all $1\leq i\leq n$.

We will prove that the set of  the \nptx\ over $X$ is a linear basis of  $GDNP(X)$.

Unless otherwise stated, throughout the paper, we   use $a,b,c, \dots $ to denote elements in $X\cup \{ e\}$, $\alpha, \beta,\gamma, \dots$
elements in the field $k$, $u,v, \dots$
words in the commutative associative monoid $[X]$, $T$  word in  $ GDNP(X)$, $m,n,p,q, t,l, i,j, \dots$  nonnegative integer numbers.
Moreover, we may omit $\cdot$,  if $\cdot$ is a commutative and associative bilinear operation. For example, $T_1\cdot T_2\cdot T_3$ (with any bracketing) in $GDNP(X)$ is simply denoted by $T_1T_2 T_3$.

\begin{definition}
\emph{A} \cca\ \emph{$(\mathcal{A},\cdot, \ast, D)$}  \emph{is
a vector space
with bilinear  operations} $\cdot, \ast $  \emph{and a linear operation $D$ such that} $(\mathcal{A}, \cdot)$
\emph{forms a commutative associative  algebra with unit} $e$, $(\mathcal{A}, \ast)$ \emph{forms a commutative associative  algebra and}
$``\cdot, \ast , D"$ \emph{are compatible in the sense that the following identities hold}:
\begin{eqnarray*}
&&(x\cdot y)\ast z=x \cdot (y \ast z),\\
&&D(x\ast y)=(Dx)\ast y +x \ast (Dy),\\
&&D(x\cdot y) =(Dx) \cdot y +x \cdot (Dy)-x\cdot y \cdot (De).
\end{eqnarray*}
\end{definition}

Let $(\mathcal{A}, \cdot, \ast, D)$ be a  \cca. Then for any $x,y,z\in \mathcal{A}$, we have
 $$
 (x\cdot y)\ast z=x \cdot (y \ast z),
 $$
 $$
(x\ast y)\cdot z=(y\ast x)\cdot z=z\cdot (y\ast x)=(z\cdot y)\ast x=x\ast (z\cdot y)=x\ast (y\cdot z).
$$
Therefore, for any $y_1, \dots, y_n \in \mathcal{A}$, $\delta_1, \dots, \delta_{n-1}\in \{\cdot, \ast\}$, we have
$[y_1\delta_1y_2\delta_2\cdots\delta_{n-1}y_n]_{_L}=(y_1\delta_1y_2\delta_2\cdots\delta_{n-1}y_n)$ (with any bracketing), and thus $y_1\delta_1y_2\delta_2\cdots\delta_{n-1}y_n$ makes sense.

\begin{lemma}\label{become novikov poisson}
Let $(\mathcal{A}, \cdot, \ast, D)$ be a  \cca.
The operation $\circ$  on \ma is defined by
$$
x\circ y := x\ast Dy, \ x,y\in \mathcal{A}.
$$
Then $(\mathcal{A}, \cdot, \circ)$ forms a \npa.
\end{lemma}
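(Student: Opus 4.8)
The plan is to verify directly that the new operation $\circ$, together with $\cdot$, satisfies all the defining identities of a \npa. Since $(\mathcal{A},\cdot)$ is already a commutative associative algebra by hypothesis, what remains is to check that $(\mathcal{A},\circ)$ is a \na\ (left symmetry and right commutativity) and that the two GDN-Poisson compatibility conditions hold. Throughout I would lean on the derived identity $(x\ast y)\cdot z=x\ast(y\cdot z)$ recorded just before the lemma, which renders any product built from $\cdot$ and $\ast$ unambiguous and lets me freely pull factors across the two products.

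First I would dispatch the two easiest axioms. Right commutativity is immediate: both $(x\circ y)\circ z$ and $(x\circ z)\circ y$ expand to $x\ast Dy\ast Dz$ using only the commutativity and associativity of $\ast$. The first compatibility condition $(x\cdot y)\circ z=x\cdot(y\circ z)$ follows by writing the left side as $(x\cdot y)\ast Dz$ and applying the hypothesis $(x\cdot y)\ast w=x\cdot(y\ast w)$ with $w=Dz$.

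Next I would handle left symmetry, where the Leibniz rule $D(x\ast y)=(Dx)\ast y+x\ast(Dy)$ is the key input. Expanding $x\circ(y\circ z)=x\ast D(y\ast Dz)$ produces the term $x\ast Dy\ast Dz$ together with a second-order term $x\ast y\ast D^2 z$; subtracting $(x\circ y)\circ z=x\ast Dy\ast Dz$ leaves exactly $x\ast y\ast D^2 z$. This expression is symmetric in $x$ and $y$ by commutativity of $\ast$, so the left symmetry identity holds.

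The main obstacle, and the only place where the correction identity $D(x\cdot y)=(Dx)\cdot y+x\cdot(Dy)-x\cdot y\cdot(De)$ enters, is the second compatibility condition. Expanding $x\circ(y\cdot z)=x\ast D(y\cdot z)$ with this rule and using mixed associativity to pull the leading $\ast$-factor through $\cdot$, I expect $(x\circ y)\cdot z-x\circ(y\cdot z)$ to collapse to $-x\ast(y\cdot Dz)+x\ast(y\cdot z\cdot De)$. The crux is then to notice that each surviving term can be rewritten via $(x\ast y)\cdot w=x\ast(y\cdot w)$ together with commutativity of $\ast$ into a form symmetric in $x$ and $y$; for instance $x\ast(y\cdot Dz)=(x\ast y)\cdot Dz=(y\ast x)\cdot Dz=y\ast(x\cdot Dz)$, and likewise for the correction term $x\ast(y\cdot z\cdot De)$. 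Hence the analogous expression $(y\circ x)\cdot z-y\circ(x\cdot z)$ matches it term by term, and the second compatibility condition follows. Keeping careful track of the correction factor $x\cdot y\cdot(De)$ and confirming that it too is symmetric is the delicate bookkeeping step, but no new identity beyond the three in the definition is required.
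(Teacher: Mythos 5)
Your proposal is correct and follows essentially the same route as the paper: a direct verification of each axiom using the three defining identities and the derived mixed-associativity relation $(x\ast y)\cdot z=x\ast(y\cdot z)$. The paper writes out only the second compatibility condition (obtaining exactly your residue $-x\ast(y\cdot Dz)+x\ast(y\cdot z\cdot De)$ and concluding by symmetry in $x$ and $y$) and declares the remaining identities similar, which your sketch fills in consistently.
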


\begin{proof}
The proof is straightforward.  For example,
\begin{eqnarray*}
&&(x\circ y ) \cdot z-x\circ(y\cdot z)\\
&=&(x\ast Dy)\cdot z -x\ast (Dy\cdot z)-x\ast (y \cdot Dz)+x\ast (y\cdot z\cdot De)\\
&=&-x\ast y \cdot Dz +x\ast y\cdot z\cdot De\\
&=&(y\circ x) \cdot z-y\circ(x\cdot z).
\end{eqnarray*}
The other identities can be proved similarly.
\end{proof}

Define  $D^0a:=a$, $a\in X\cup \{e\}$.
Let $ Y=\{ D^{i}a, \cdot, \ast \mid i \in \mathbb{Z}_{\geq 0},a\in X\cup \{e\}\}$ and  $Y^{+}$ be the free semigroup generated by $Y$,
\begin{eqnarray*}
&C[X]:=&\{D^{i_{1}}a_1\ast \dots  \ast D^{i_{j}}a_j\cdot D^{i_{j+1}}a_{j+1} \cdot D^{i_{j+2}}a_{j+2}\cdots D^{i_{n}}a_{n} \in Y^{+}\mid n\geq 1, 1\leq j\leq n, \\
&&(i_{1},a_1) \geq \dots\geq  (i_{n},a_n) \mbox{ lexicographically, and if } j<n, \mbox{ then } (i_{n},a_n) \neq (0,e)\},
\end{eqnarray*}
where $\mathbb{Z}_{\geq 0}$ is the set of nonnegative integer numbers.

For any $w_1=D^{i_{1}}a_1\ast \dots  \ast D^{i_{n}}a_n \cdot D^{i_{n+1}}a_{n+1} \cdots D^{i_{n+m}}a_{n+m},\  w_2=D^{j_{1}}b_1\ast \dots  \ast D^{j_{p}}b_p \cdot D^{j_{p+1}}b_{p+1} \cdots D^{j_{p+q}}b_{p+q} \in C[X]$, define
$w_1=w_2$ if and only if $n=p, m=q, (i_l, a_l)=(j_l,b_l)$ for all $1\leq l\leq n+m.$

Let $kC[X]$ be the linear space with a $k$-basis $C[X]$ and $w_1, w_2$ be as above.
Let $ D^{l_{1}}c_1,  \dots, D^{l_{n+m+p+q}}c_{n+m+p+q}$ be a reordering of $D^{i_{1}}a_1$, $\dots$,   $D^{i_{n+m}}a_{n+m}$, $D^{j_{1}}b_1$, $\dots$,  $D^{j_{p+q}}b_{p+q}$, such that  $(l_{1},c_1)\geq \dots \geq (l_{n+m+p+q},c_{n+m+p+q})$ lexicographically. For convenience, we define $(l_{n+m+p+q+1},c_{n+m+p+q+1})=(0,e)$.

We define the  bilinear operations $\cdot, \ast$ and the linear operation $D$ on $kC[X]$  as follows:
\begin{enumerate}
 \item[(i)]\  $  w_1\cdot w_2:=D^{l_{1}}c_1\ast \dots  \ast D^{l_{n+p-1}}c_{n+p-1} \cdot  D^{l_{n+p}}c_{n+p} \cdots D^{l_{t}}c_{t}$,
where $t=n+p-1$ if $(l_{n+p},c_{n+p})=(0,e)$; otherwise,   $n+p\leq t\leq n+m+p+q, \ (l_{t}, c_{t})>(0,e)=(l_{t+1}, c_{t+1})$.
 \item[(ii)]\ $  w_1\ast w_2:=D^{l_{1}}c_1\ast \dots  \ast D^{l_{n+p}}c_{n+p} \cdot  D^{l_{n+p+1}}c_{n+p+1} \cdots D^{l_{t}}c_{t}
$,
where  $t=n+p$ if $ (l_{n+p+1},c_{n+p+1})=(0,e)$; otherwise, $n+p+1\leq t\leq n+m+p+q, \ (l_{t}, c_{t})>(0,e)=(l_{t+1}, c_{t+1})$.
\item[(iii)]\ $ D(w_1):=\sum_{1\leq t\leq n+m} [D^{i_{1}}a_1\delta_1 D^{i_{2}}a_2\delta_2\cdots  \delta_{t-1}  D^{i_{t}+1}a_t\delta_{t} \cdots  \delta_{n+m-1} D^{i_{n+m}}a_{n+m}]_{_L}-m w_1\cdot De$,
where   $ \delta_{1}=\dots = \delta_{n-1}=\ast $ and $\delta_{n}=\dots = \delta_{n+m-1}=\cdot $.
\end{enumerate}

With notations as above, we have
\begin{lemma}
$(kC[X], \cdot, \ast, D)$ is a \cca.
\end{lemma}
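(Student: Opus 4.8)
The plan is to check the defining clauses of a \cca\ one by one, directly from the combinatorial rules for $\cdot$, $\ast$ and $D$. It is convenient to read a basis word $w\in C[X]$ as the pair consisting of its decreasingly sorted multiset of \emph{letters} $D^{i}a$ together with the integer $n$ recording how many of them lie in the $\ast$-block. The first remark is that $\cdot$ and $\ast$ are computed by one common recipe: take the disjoint union of the letter-multisets of the two factors, re-sort it decreasingly, put the top letters into the $\ast$-block and the rest into the $\cdot$-block, and delete the trailing letters equal to $(0,e)$; only the $\ast$-block size distinguishes the products, being $n_{1}+n_{2}$ for $\ast$ and $n_{1}+n_{2}-1$ for $\cdot$. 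Since the union of multisets is symmetric and associative and the size functions $n_{1}+n_{2}$ and $n_{1}+n_{2}-1$ are symmetric and associative ($(n_{1}+n_{2}-1)+n_{3}-1=n_{1}+(n_{2}+n_{3}-1)-1$), commutativity and associativity of both products reduce to one point about the truncation: because $e<x$ for all $x$ and $0$ is least, $(0,e)$ is the global minimum, so a unit-letter always sits at the very bottom of every product and is deleted exactly when the $\ast$-block (whose size is fixed by the additive count) does not reach down to it. This makes the truncation \emph{benign}, i.e.\ insensitive to the order of multiplication, and it also gives the unit law $w\cdot e=w$ and the first compatibility $(x\cdot y)\ast z=x\cdot(y\ast z)$, both sides of the latter carrying the same union of letters and the same $\ast$-block size $n_{x}+n_{y}+n_{z}-1$.

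The real content is the behaviour of $D$, and for this I would split its defining formula into the \emph{increment part} $\widetilde{D}(w):=\sum_{t}[\cdots D^{i_{t}+1}a_{t}\cdots]_{_L}$ and the \emph{correction part} $-\,m\,w\cdot De$. Raising the order of one letter preserves the $\ast$-block size, and the letters of a product are the union of the letters of the factors, so whenever no unit-letter is truncated the sum defining $\widetilde{D}$ of a product splits termwise into the two factor-sums, giving $\widetilde{D}(w_{1}\ast w_{2})=\widetilde{D}(w_{1})\ast w_{2}+w_{1}\ast\widetilde{D}(w_{2})$ and its $\cdot$-analogue. Reinstating the correction part and using the already-proved commutativity, associativity and first compatibility, the auxiliary identities $(w_{1}\cdot De)\ast w_{2}=(w_{1}\ast w_{2})\cdot De$ and $w_{1}\cdot(w_{2}\cdot De)=(w_{1}\cdot w_{2})\cdot De$ let all the $De$-contributions be collected; in the generic case the coefficients then match directly and the two Leibniz rules follow.

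The main obstacle is precisely that the correction coefficient is \emph{not} simply additive in the $\cdot$-lengths of the factors, and the reason is again the unit. When two $\ast$-blocks merge, a unit-letter can be pushed into the $\cdot$-block and truncated, so the number $m$ of surviving $\cdot$-letters of $w_{1}\ast w_{2}$ drops below $m_{1}+m_{2}$; conversely, applying $\widetilde{D}$ to such a truncated-away unit-letter turns $(0,e)$ into $(1,e)\neq(0,e)$, producing a \emph{revived} term that escapes truncation and, as one checks, equals $(w_{1}\ast w_{2})\cdot De$. The decisive computation is that the number of these revived terms is exactly the number of truncated unit-letters, i.e.\ exactly the gap $m_{1}+m_{2}-m(w_{1}\ast w_{2})$ (respectively $m_{1}+m_{2}+1-m(w_{1}\cdot w_{2})$ in the $\cdot$-case), so that the revived terms reduce the naive coefficient $m_{1}+m_{2}$ to the true surviving count and the two sides of each Leibniz rule agree. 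I would therefore organise the whole verification by separating the letters equal to $(0,e)$ from the rest: on the non-unit letters $\widetilde{D}$ alone already produces the required identity, while on the unit letters the truncation, the revived $De$-terms and the correction term $-\,m\,w\cdot De$ must be balanced against one another. All remaining steps are routine merge-and-sort bookkeeping, and essentially everything delicate is concentrated in this balancing of unit-letters against the $De$-correction.
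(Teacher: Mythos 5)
Your proposal is correct and takes essentially the same approach as the paper: the commutativity, associativity, unit and first compatibility laws come from the merge--sort--truncate normal form of products, and both Leibniz rules come from the key cancellation that incrementing a truncated unit letter $(0,e)$ to $(1,e)$ produces exactly $(\mbox{product})\cdot De$, with the number of such ``revived'' terms equal to the gap between the naive correction coefficient $m_1+m_2$ (resp.\ $m_1+m_2+1$) and the true one. The paper packages this same cancellation in the opposite direction, as a padding identity $D([w_1\cdot e\cdot e\cdots e]_{_L})=D(w_1)$ that lets it apply the defining formula of $D$ to the full untruncated merged letter list of $w_1\cdot w_2$ (resp.\ $w_1\ast w_2$) and then split the increment sum factorwise.
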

\begin{proof}
By definitions of $\cdot$ and $\ast$, we have $[D^{i_{1}}a_1\ast \dots  \ast D^{i_{n}}a_n \cdot D^{i_{n+1}}a_{n+1} \cdots D^{i_{n+m}}a_{n+m}\cdot e]_{_L}=w_1=w_1\cdot e=e\cdot w_1$ and $  w_1\cdot w_2=[D^{l_{1}}c_1\ast \dots  \ast D^{l_{n+p-1}}c_{n+p-1} \cdot  D^{l_{n+p}}c_{n+p} \cdots D^{l_{n+m+p+q}}c_{n+m+p+q}]_{_L}$. It follows immediately that $\cdot$ is associative and commutative. Similarly,
$\ast$ is associative and commutative. Moreover, the identity $x \cdot (y \ast z)=(x\cdot y)\ast z$ follows by the above arguments. Let $D^{i_{n+m+1}}a_{n+m+1}=D^{i_{n+m+2}}a_{n+m+2}=\cdots=D^{i_{n+m+l}}a_{n+m+l}=e$. Then
\begin{eqnarray*}
&&D([w_1\cdot D^{i_{n+m+1}}a_{n+m+1}\cdot D^{i_{n+m+2}}a_{n+m+2} \cdots  D^{i_{n+m+l}}a_{n+m+l}]_{_L})=D(w_1)\\
&=&[D(w_1)\cdot  D^{i_{n+m+1}}a_{n+m+1}\cdot D^{i_{n+m+2}}a_{n+m+2} \cdots  D^{i_{n+m+l}}a_{n+m+l}]_{_L}\\
&&+\sum_{n+m+1\leq t\leq n+m+l}[w_1\cdot  D^{i_{n+m+1}}a_{n+m+1}\cdots D^{i_{t}+1}a_{t}\cdots D^{i_{n+m+l}}a_{n+m+l}]_{_L}-lw_1\cdot De\\
&=&\sum_{1\leq t\leq n+m+l} [D^{i_{1}}a_1\delta_1\cdots  \delta_{t-1}  D^{i_{t}+1}a_t\delta_{t} \cdots  \delta_{n+m+l-1} D^{i_{n+m+l}}a_{n+m+l}]_{_L}\\
&&-(m+l)[w_1\cdot D^{i_{n+m+1}}a_{n+m+1}\cdot D^{i_{n+m+2}}a_{n+m+2} \cdots  D^{i_{n+m+l}}a_{n+m+l}]_{_L}\cdot De,
\end{eqnarray*}
 where   $ \delta_{1}=\dots = \delta_{n-1}=\ast $ and $\delta_{n}=\dots = \delta_{n+m+l-1}=\cdot $.
 Therefore,
\begin{eqnarray*}
&&D(w_1\cdot w_2)\\
&=&D([D^{l_{1}}c_1\ast \dots  \ast D^{l_{n+p-1}}c_{n+p-1} \cdot  D^{l_{n+p}}c_{n+p} \cdots D^{l_{n+m+p+q}}c_{n+m+p+q}]_{_L})\\
&=&\sum_{1\leq t\leq n+m+p+q} [D^{l_{1}}c_1\delta_1 D^{l_{2}}c_2\delta_2\cdots  \delta_{t-1}  D^{l_{t}+1}c_t\delta_{t} \cdots  \delta_{n+m+p+q-1} D^{l_{n+m+p+q}}c_{n+m+p+q}]_{_L}\\
&&-(m+q+1) w_1\cdot w_2\cdot De\\
&=&\sum_{1\leq t\leq n+m} [D^{i_{1}}a_1\eta_1 D^{i_{2}}a_2\eta_2\cdots  \eta_{t-1}  D^{i_{t}+1}a_t\eta_{t} \cdots \eta_{n+m-1} D^{i_{n+m}}a_{n+m}]_{_L}\cdot w_2-m w_1\cdot De\cdot w_2\\
&&+\sum_{1\leq t\leq p+q} [D^{j_{1}}b_1\nu_1 D^{j_{2}}b_2\nu_2\cdots  \nu_{t-1}  D^{j_{t}+1}b_t\nu_{t} \cdots  \nu_{p+q-1} D^{j_{p+q}}b_{p+q}]_{_L}\cdot w_1-q w_2\cdot De \cdot w_1\\
&&-w_1\cdot w_2\cdot De\\
&=&D(w_1)\cdot w_2+D(w_2)\cdot w_1-w_1\cdot w_2\cdot De,
\end{eqnarray*}
where $\delta_{1}=\dots = \delta_{n+p-2}=\eta_1=\dots=\eta_{n-1}=\nu_1=\dots=\nu_{p-1} =\ast$ , $\delta_{n+p-1}=\dots = \delta_{n+m+p+q-1}=\eta_{n}=\dots=\eta_{n+m-1}=\nu_{p}=\dots=\nu_{p+q-1}=\cdot$.
\begin{eqnarray*}
&&D(w_1\ast w_2)\\
&=&D([D^{l_{1}}c_1\ast \dots  \ast D^{l_{n+p}}c_{n+p} \cdot  D^{l_{n+p+1}}c_{n+p+1} \cdots  D^{l_{n+m+p+q}}c_{n+m+p+q}]_{_L})\\
&=&\sum_{1\leq t\leq n+m+p+q} [D^{l_{1}}c_1\delta_1 D^{l_{2}}c_2\delta_2\cdots  \delta_{t-1}  D^{l_{t}+1}c_t\delta_{t} \cdots  \delta_{n+m+p+q-1} D^{l_{n+m+p+q}}c_{n+m+p+q}]_{_L}\\
&&-(m+q) w_1\cdot w_2\cdot De,\\
&=&\sum_{1\leq t\leq n+m} [D^{i_{1}}a_1\eta_1 D^{i_{2}}a_2\eta_2\cdots  \eta_{t-1}  D^{i_{t}+1}a_t\eta_{t} \cdots \eta_{n+m-1} D^{i_{n+m}}a_{n+m}]_{_L}\ast w_2-m w_1\cdot De\ast w_2\\
&&+\sum_{1\leq t\leq p+q} [D^{j_{1}}b_1\nu_1 D^{j_{2}}b_2\nu_2\cdots  \nu_{t-1}  D^{j_{t}+1}b_t\nu_{t} \cdots  \nu_{p+q-1} D^{j_{p+q}}b_{p+q}]_{_L}\ast w_1-q w_2\cdot De \ast w_1\\
&=&D(w_1)\ast w_2+D(w_2)\ast w_1,
\end{eqnarray*}
where $\delta_{1}=\dots = \delta_{n+p-1}=\eta_1=\dots=\eta_{n-1}=\nu_1=\dots=\nu_{p-1} =\ast$ , $\delta_{n+p}=\dots = \delta_{n+m+p+q-1}=\eta_{n}=\dots=\eta_{n+m-1}=\nu_{p}=\dots=\nu_{p+q-1}=\cdot$.

It follows that $(kC[X], \cdot, \ast, D)$ is a \cca.
\end{proof}

Let $C(X)$ be a free \cca\ generated by $X$ and $C[X]^{\prime}:=\{[w]_{_L}\mid w\in C[X]\}$. Then we have

\begin{lemma}
The set $C[X]^{\prime}$ is a linear generating set of $C(X)$.
\end{lemma}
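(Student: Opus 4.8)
The plan is to show that $V:=\mathrm{span}_k(C[X]^{\prime})$ is a subalgebra of $C(X)$ that contains the generators; since $C(X)$ is generated by $X$ under $\cdot,\ast,D$, this forces $V=C(X)$, which is exactly the assertion that $C[X]^{\prime}$ is a linear generating set. The generators are already present: for $x\in X$ the one-factor tableau $[D^{0}x]_{_L}=x$ lies in $C[X]^{\prime}$, and likewise $e=[D^{0}e]_{_L}\in C[X]^{\prime}$ (here $j=n=1$, so no condition is violated). Thus everything reduces to showing that $V$ is closed under the three operations, and for this I will reuse the combinatorial formulas (i)--(iii), now reading them as consequences of the \cca\ identities in the \emph{free} algebra $C(X)$ rather than as the defining rules on $kC[X]$.

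For closure under $\cdot$ and $\ast$, take $w_1,w_2\in C[X]$ and consider $[w_1]_{_L}\cdot[w_2]_{_L}$ and $[w_1]_{_L}\ast[w_2]_{_L}$. Each is a product of the single-factor atoms $D^{i}a$ occurring in $w_1$ and $w_2$, and since all bracketings of a mixed $\cdot/\ast$ product agree (as noted after the Definition), each may be taken left-normed. The heart of the argument is a normalization sub-lemma: \emph{any left-normed product of atoms $D^{l}c$ equals the tableau obtained by sorting the atoms into decreasing lexicographic order, placing the largest $\sigma$ of them in the $\ast$-part and the rest in the $\cdot$-part, where $\sigma$ equals the number of atoms minus the number of $\cdot$-symbols in the pattern, and where every $D^{0}e=e$ in the $\cdot$-part is deleted.} This uses only the commutativity and associativity of $\cdot$ and of $\ast$, the compatibility $(x\cdot y)\ast z=x\cdot(y\ast z)$, and the derived chain of equalities displayed after the Definition. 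Applying it returns exactly $[w_1\cdot w_2]_{_L}$ and $[w_1\ast w_2]_{_L}$ with $w_1\cdot w_2,\ w_1\ast w_2\in C[X]$ (this reproduces formulas (i) and (ii); note that the count $\sigma$ gives $n+p-1$ in the first case and $n+p$ in the second), so both products lie in $C[X]^{\prime}\subseteq V$.

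For closure under $D$, apply the two derivation identities $D(x\ast y)=(Dx)\ast y+x\ast(Dy)$ and $D(x\cdot y)=(Dx)\cdot y+x\cdot(Dy)-x\cdot y\cdot(De)$ to $D([w]_{_L})$. Because the factors of $w$ are already atoms $D^{i}a$, the derivation lands on one atom at a time, sending $D^{i}a$ to $D^{i+1}a$, while the $\cdot$-Leibniz rule contributes the correction terms ending in $\cdot(De)$; every summand is again a left-normed product of atoms. The normalization sub-lemma rewrites each summand as a tableau in $C[X]^{\prime}$, and the resulting bookkeeping is precisely formula (iii), so $D([w]_{_L})\in V$. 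Combining the three closures, $V$ is a subalgebra of $C(X)$ containing $X\cup\{e\}$, hence $V=C(X)$.

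The main obstacle is the normalization sub-lemma itself. The subtle point is not that a mixed product can be rearranged—that is immediate from commutativity, associativity and compatibility—but that the rearrangement always lands in the \emph{canonical} tableau with the prescribed split $\sigma$, independently of the original bracketing. I expect to prove this by induction on the number of atoms (and on the number of $\cdot$-symbols), repeatedly using $(x\cdot y)\ast z=x\cdot(y\ast z)$ and its consequences to migrate one atom at a time between the two parts and commutativity of each product to sort, together with a separate routine check that the $\cdot$-unit $e$—but not the genuine elements $D^{i}e$ with $i\geq 1$—is absorbed, matching the condition $(i_n,a_n)\neq(0,e)$ in the definition of $C[X]$.
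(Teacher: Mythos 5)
Your proposal is correct, and it reaches the lemma by a recognizably different organization than the paper, though with the same computational core. The paper argues top-down on an arbitrary word of $C(X)$: it first applies the two Leibniz identities to push every $D$ down onto the atoms $D^{i}a$, obtaining a linear combination of mixed $\cdot/\ast$-products of atoms (with some bracketing), then uses the compatibility identities to make the bracketing irrelevant, the swap $[x\cdot y\ast z]_{_L}=[x\ast y\cdot z]_{_L}$ to move all $\ast$'s to the left, and commutativity to sort. You argue bottom-up instead: $\mathrm{span}_{k}(C[X]^{\prime})$ contains $X\cup\{e\}$ and is closed under $\cdot$, $\ast$ and $D$, hence equals $C(X)$ by structural induction on words. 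The identities invoked are exactly the same, and your normalization sub-lemma is precisely the paper's flatten--swap--sort phase, so neither route is easier at its core; your sketch of the sub-lemma is sound (the boundary exchange $(a\ast b)\cdot c=(a\ast c)\cdot b$, which follows from $(x\ast y)\cdot z=x\ast(y\cdot z)$ and commutativity, together with transpositions inside each part realizes every permutation of the atoms while preserving the pattern $\ast\cdots\ast\,\cdot\cdots\cdot$), though it would need to be written out in full to serve as a replacement proof. What your organization buys is twofold: it isolates and explicitly verifies the absorption of the $\cdot$-unit $e$ (the condition $(i_{n},a_{n})\neq(0,e)$ in the definition of $C[X]$), a point the paper's proof passes over silently; and, because your closure computations reproduce formulas (i)--(iii) inside $C(X)$, you have in effect done most of the work toward the subsequent lemma that $(kC[X],\cdot,\ast,D)$ is isomorphic to $C(X)$, which the paper instead deduces cheaply from the present lemma via the homomorphism $\eta$. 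The corresponding cost is length: for the generation statement alone, the paper's one-pass rewriting argument is shorter.
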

\begin{proof}
Given a word  $w\in C(X)$, we can apply $D(x\ast y)=Dx\ast y +x \ast Dy$ and
$D(x\cdot y) =Dx \cdot y +x \cdot Dy-x\cdot y \cdot De$ (if any) to rewrite $w$ as a linear combination of words of the form $(D^{i_1}a_1\delta_1D^{i_2}a_2\delta_2\cdots\delta_{n-1}D^{i_n}a_n)$ (with some bracketing), where $n\geq 1, a_1,\dots, a_n\in X\cup\{e\},i_1,\dots i_n\in \mathbb{Z}_{\geq 0}$.
Noting that for any $x, y, z\in C(X)$,
$
(x\cdot y)\ast z=x\cdot(y\ast z)
$ and
$
(x\ast y)\cdot z=x\ast (y\cdot z)
$,
 we have
$[D^{i_1}a_1\delta_1D^{i_2}a_2\delta_2\cdots\delta_{n-1}D^{i_n}a_n]_{_L}=(D^{i_1}a_1\delta_1D^{i_2}a_2\delta_2\cdots\delta_{n-1}D^{i_n}a_n)$ (with any bracketing). Since $[x\cdot y \ast z]_{_L}=(x\cdot y)\ast z=z\ast (x\cdot y)=(z\ast x)\cdot y=(x\ast z)\cdot y=x\ast (z\cdot y)=x\ast (y\cdot z)=(x\ast y)\cdot z=[x\ast y \cdot z]_{_L}$, we may assume that, in $[D^{i_1}a_1\delta_1D^{i_2}a_2\delta_2\cdots\delta_{n-1}D^{i_n}a_n]_{_L}$, $\delta_1=\delta_2=\dots =\delta_l=\ast$,
$\delta_{l+1}=\delta_{l+2}=\dots =\delta_{n-1}=\cdot$ for some $0\leq l\leq n-1$. Finally, by applying  the commutativity of $\cdot$ and $ \ast$, we get that any element of $C(X)$ can be written as a linear combination of elements of $C[X]^{\prime}$.
\end{proof}

\begin{lemma} The algebra
$(kC[X], \cdot, \ast, D)$ is isomorphic to $C(X)$.
\end{lemma}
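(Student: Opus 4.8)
The plan is to exhibit a natural $k$-linear map $\varphi\colon kC[X]\to C(X)$ sending each basis word $w_1=D^{i_1}a_1\ast\cdots\ast D^{i_n}a_n\cdot D^{i_{n+1}}a_{n+1}\cdots D^{i_{n+m}}a_{n+m}$ to the corresponding left-normed element $[w_1]_{_L}\in C[X]^{\prime}\subseteq C(X)$, and then to show that $\varphi$ is an isomorphism of \ccas. Surjectivity is immediate from the previous lemma, which asserts that $C[X]^{\prime}$ is a linear generating set of $C(X)$. So the substance of the proof is to check that $\varphi$ is a homomorphism with respect to $\cdot$, $\ast$, and $D$, and that it is injective.

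First I would verify that $\varphi$ is a homomorphism. The key observation is that in any \cca\ the three defining identities together with the derived relations $(x\ast y)\cdot z=x\ast(y\cdot z)$ force all left-normed products to be well defined independently of bracketing and order within each block, exactly as recorded in the remark following the first definition. Hence in $C(X)$ the product $[w_1]_{_L}\cdot[w_2]_{_L}$ may be reassembled into a single left-normed word whose $\ast$-factors are the union of the $\ast$-factors of $w_1,w_2$ and whose $\cdot$-factors are the union of the $\cdot$-factors, after deleting redundant copies of $e$; this is precisely the combinatorial recipe defining $w_1\cdot w_2$ in $kC[X]$, so $\varphi(w_1\cdot w_2)=\varphi(w_1)\cdot\varphi(w_2)$, and similarly for $\ast$. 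For $D$, I would apply the two Leibniz identities for $D$ in $C(X)$ to $[w_1]_{_L}$: each application raises the exponent on one factor by one and, in the $\cdot$-case, produces the correction term $-w_1\cdot De$ per $\cdot$-factor, which matches the $-m\,w_1\cdot De$ appearing in clause (iii) of the definition of $D$ on $kC[X]$. Thus $\varphi(D w_1)=D\varphi(w_1)$. Since $(kC[X],\cdot,\ast,D)$ was already shown to be a \cca, all these computations are legitimate.

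The main obstacle is injectivity, and I would handle it by the standard PBW-style strategy of matching dimensions via a universal property. Because $C(X)$ is the \emph{free} \cca\ on $X$, there is a unique homomorphism $\psi\colon C(X)\to kC[X]$ sending each generator $x\in X$ to the basis word $D^0x=x$ (this uses that $kC[X]$ is a \cca, established in the preceding lemma). I would then show that $\psi\circ\varphi=\mathrm{id}_{kC[X]}$. Since $\varphi$ is already a homomorphism and $\psi$ sends generators to generators, the composite $\psi\circ\varphi$ is the identity on the generating set $\{D^0 a\}$ and, being a homomorphism commuting with $\cdot$, $\ast$, and $D$, it sends every basis word $w\in C[X]$ to itself; here one checks that applying $D$ repeatedly and multiplying recovers each $D^i a$ and hence each $w_1$, because the operations on $kC[X]$ were defined to respect the basis. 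The existence of the one-sided inverse $\psi$ forces $\varphi$ to be injective, and combined with surjectivity this gives that $\varphi$ is an isomorphism.

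The delicate point to get right is the bookkeeping for $D$: one must confirm that the total count of $e$-correction terms generated by iterating the Leibniz rule on a word with $m$ dot-factors is exactly $-m\,w_1\cdot De$ and not some other multiple, since spurious $e$-factors can be created or absorbed when $De$ is itself expanded. I would address this by first recording that $De$ acts as a scalar-free ``correction'' that commutes past all factors, and by inducting on $m$ using the computation already carried out in the proof that $kC[X]$ is a \cca, where the relation $D([w_1\cdot e\cdots e]_{_L})=D(w_1)$ was verified; that same bookkeeping transfers verbatim to $C(X)$ under $\varphi$, so no new difficulty arises beyond careful indexing.
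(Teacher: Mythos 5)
Your proof is correct, but it is organized in the opposite direction from the paper's and, as a result, spends most of its effort on a step the paper avoids entirely. The paper takes your auxiliary map $\psi$ as the primary object: freeness of $C(X)$ gives the \cca\ homomorphism $\eta\colon C(X)\to kC[X]$ with $\eta(a)=a$ (this is your $\psi$), the definitions of the operations on $kC[X]$ give $\eta([w]_{_L})=w$ for every $w\in C[X]$ (left-normed evaluation of a sorted word reproduces that word), and then pure linear algebra finishes the job: $\eta$ carries the spanning set $C[X]^{\prime}$ of $C(X)$ onto the basis $C[X]$ of $kC[X]$, so $C[X]^{\prime}$ must be a basis of $C(X)$ and $\eta$ is an isomorphism. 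Your injectivity argument ($\psi\circ\varphi=\mathrm{id}$, which is precisely the statement $\psi([w]_{_L})=w$) together with your surjectivity argument already contains this reasoning, so your longest step --- the direct verification, with Leibniz bookkeeping inside $C(X)$, that $w\mapsto[w]_{_L}$ respects $\cdot$, $\ast$ and $D$ --- is redundant: a linear bijection whose inverse is a \cca\ homomorphism is automatically a \cca\ isomorphism, so the homomorphism property of $\varphi$ comes for free. If you do retain that step, note one imprecision in it: the $\ast$-factors of $w_1\cdot w_2$ are \emph{not} the union of the $\ast$-factors of $w_1$ and $w_2$; by clause (i) they are the $n+p-1$ largest factors after merging and sorting, and a $\cdot$-factor of one word may outrank a $\ast$-factor of the other. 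Your conclusion survives because the derived identity $(x\ast y)\cdot z=(x\ast z)\cdot y$ shows that such an element of a \cca\ depends only on the multiset of factors and on how many of them are starred, not on which ones --- but as literally stated the claim about words is false, and this is exactly the kind of detail the paper's shorter route never has to confront.
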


\begin{proof}
Define a \cca\ homomorphism $\eta \ : \ C(X)\longrightarrow kC[X]$ induced by $\eta (a)=a $ for any $a\in X$.  Since $C[X]^{\prime}$ is a linear generating set of $C(X)$
 and $\eta([w]_{_L})=w$ for any
 $w\in C[X]$ and $C[X]$ is a linear basis of $kC[X]$, we have that $C[X]^{\prime}$ is a linear basis of $C(X)$.  It follows that   $\eta$ is an isomorphism.
\end{proof}
From now on, we identify $C[X]^{\prime}$ with $C[X]$.

Let $w=D^{i_{1}}a_1\ast \dots  \ast D^{i_{j}}a_j\cdot D^{i_{j+1}}a_{j+1} \cdot D^{i_{j+2}}a_{j+2}\cdots D^{i_{n}}a_{n}\in C[X]$. Define
$$
 wt(w):=(\sum_{1\leq t\leq n}i_t)-(j-1),\ \
|w|_{\ast}:=j-1,\ \ ord(w):=(|w|_{\ast}, i_1, a_{1},\dots , i_n,  a_{n},-1).
$$
We order the set of $C[X]$ as follows:
$$
w_1<w_2 \Leftrightarrow ord(w_1)<ord(w_2)\mbox{ lexicographically}.
$$

The proof of the following lemma is straightforward.

\begin{lemma}\label{keep D}
\begin{enumerate}
\item[(i)]\ For any $w_1, w_2, w_3\in  C[X]$, we have
$$
w_1<w_2  \Rightarrow w_1\ast w_3 < w_2\ast w_3 ,\  w_1\cdot w_3 < w_2\cdot w_3.
$$
\item[(ii)]\ Let $w=D^{i_{1}}a_1\ast \dots  \ast D^{i_{j}}a_j\cdot D^{i_{j+1}}a_{j+1} \cdot D^{i_{j+2}}a_{j+2}\cdots D^{i_{n}}a_{n}$. If $(i_1,a_1)>(i_2, a_2)
    $, then
    $$
    \overline{D(w)}=D^{i_{1}+1}a_1\ast \dots  \ast D^{i_{j}}a_j\cdot D^{i_{j+1}}a_{j+1} \cdot D^{i_{j+2}}a_{j+2}\cdots D^{i_{n}}a_{n},
    $$
     where $\overline{f}$ is the leading word of $f \in kC[X]$.

\end{enumerate}
\end{lemma}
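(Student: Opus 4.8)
The plan is to treat both parts as purely combinatorial statements about the total order $ord$ and about the explicit description of $\ast$, $\cdot$ and $D$ on the basis $C[X]$. Recall that the product of two basis monomials is formed by merging their factor multisets into one non-increasing sequence, cutting it into an $\ast$-part and a $\cdot$-part at a position that depends only on the two $\ast$-counts, and finally deleting the trailing factors equal to $(0,e)$ from the $\cdot$-part; and that $ord(w)$ records first $|w|_{\ast}$, then the non-increasing factor sequence $(i_1,a_1),\dots,(i_n,a_n)$, then the marker $-1$.

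\emph{Part (i).} First I would split according to $\ast$-counts. If $|w_1|_{\ast}<|w_2|_{\ast}$ then, since $|w_1\ast w_3|_{\ast}=|w_1|_{\ast}+|w_3|_{\ast}+1$ and $|w_1\cdot w_3|_{\ast}=|w_1|_{\ast}+|w_3|_{\ast}$ (and likewise for $w_2$), the first coordinate of $ord$ already settles both inequalities; this case is immediate. The substantive case is $|w_1|_{\ast}=|w_2|_{\ast}$, where both products have equal $\ast$-count and one must compare factor sequences. Here I would reduce to the following combinatorial statement: if $S_1,S_2,S_3$ are non-increasing sequences of factors and $S_1$ precedes $S_2$ lexicographically, then $\mathrm{merge}(S_1,S_3)$ precedes $\mathrm{merge}(S_2,S_3)$, where $\mathrm{merge}$ is the non-increasing rearrangement of the combined multiset. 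I would prove this by inserting the factors of $S_3$ one at a time and checking, by a short case analysis on where the inserted factor falls relative to the first position at which $S_1$ and $S_2$ differ, that strict precedence is preserved at each step.

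The one genuine subtlety in part (i) is the deletion of trailing $(0,e)$'s, which prevents the products from being literal merges; I expect this bookkeeping to be the main obstacle. The clean way around it is to note that $(0,e)$ is the minimum factor and can never occur in a $\cdot$-part, so that for elements of equal $\ast$-count the order induced by $ord$ coincides with the lexicographic order on the factor sequences padded by an infinite tail of copies of $(0,e)$ (i.e.\ $(0,e)$ may replace the marker $-1$). Under this reformulation the deleted factors are simply restored by the padding, each product becomes an honest merge of padded sequences, and the merge statement above applies verbatim.

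\emph{Part (ii).} I would expand
\[
D(w)=\sum_{1\le t\le n}[\,D^{i_1}a_1\,\delta_1\cdots\delta_{t-1}\,D^{i_t+1}a_t\,\delta_t\cdots\delta_{n-1}\,D^{i_n}a_n\,]_{_L}-(n-j)\,w\cdot De,
\]
with $\delta_1=\cdots=\delta_{j-1}=\ast$ and $\delta_j=\cdots=\delta_{n-1}=\cdot$, and identify the candidate leading word as the $t=1$ summand. Because of the hypothesis $(i_1,a_1)>(i_2,a_2)$, the bumped factor $(i_1+1,a_1)$ is strictly larger than every other factor, so the $t=1$ summand is already non-increasing and equals the asserted monomial $D^{i_1+1}a_1\ast\cdots\ast D^{i_j}a_j\cdot D^{i_{j+1}}a_{j+1}\cdots D^{i_n}a_n$ with coefficient $+1$. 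It then remains to show every other summand has strictly smaller leading word. For a summand with $t\ge2$ the leading factor of its sorted monomial is $(i_1,a_1)$ or $(i_t+1,a_t)$, and since $(i_t,a_t)<(i_1,a_1)$ (adding $(1,0)$ preserves strict order) each lies strictly below $(i_1+1,a_1)$; for the correction term the leading factor is $\max\{(i_1,a_1),(1,e)\}$, and the hypothesis—which forbids $(i_1,a_1)=(0,e)$—again forces it strictly below $(i_1+1,a_1)$. As the $t=1$ monomial strictly dominates all others, no cancellation can reach it, so $\overline{D(w)}$ is exactly this monomial. The main point to get right here is the comparison with the $w\cdot De$ term, the only summand whose leading factor need not be of the form $(\,\cdot\,,a_s)$.
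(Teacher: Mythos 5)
Your proof is correct. There is nothing in the paper to compare it against: the authors state this lemma with only the remark that ``the proof of the following lemma is straightforward,'' so your write-up supplies precisely the suppressed details. The two points where the lemma is less straightforward than that remark suggests are both identified and handled correctly by you. First, in part (i), the operations on $C[X]$ are not literal merges of factor sequences, because trailing $(0,e)$ factors of the $\cdot$-part are deleted and $ord$ ends with the marker $-1$; your reformulation---that $(0,e)$ never occurs in a $\cdot$-part, so on each class of fixed $\ast$-count the order given by $ord$ coincides with lexicographic order on factor sequences padded by an infinite tail of copies of $(0,e)$---removes both issues at once, after which the claim that merging with a fixed sequence preserves strict lexicographic order follows from your one-factor insertion analysis (the cases, according to whether the inserted factor exceeds both, exactly one, or neither of the entries at the first disagreement, all close). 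Second, in part (ii), the only summand whose leading factor need not be of the form $(i_s,a_s)$ or $(i_s+1,a_s)$ is the correction term $-(n-j)\,w\cdot De$, and you correctly isolate that comparing its leading factor $\max\{(i_1,a_1),(1,e)\}$ with $(i_1+1,a_1)$ uses the hypothesis $(i_1,a_1)>(i_2,a_2)$ (which forces $(i_1,a_1)\neq(0,e)$) precisely when $i_1=0$. One small step you use implicitly and could state explicitly: each summand of $D(w)$ is a single basis word of $\ast$-count $j-1$ (the operations (i), (ii) of $kC[X]$ return basis words, not sums, and neither sorting nor deletion of trailing $(0,e)$'s can alter the leading factor), so once the $t=1$ word strictly dominates all the other words, its coefficient in $D(w)$ is exactly $1$, no cancellation can occur, and $\overline{D(w)}$ is as asserted.
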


Define
$$
  GDNP_{_0}(X):= span_{k}\{w\in C[X] \mid wt(w)=0\},
$$
which is the subspace of $C(X) $ spanned by the set $\{w\in C[X] \mid wt(w)=0\}$.
Then it is clear that $ (GDNP_{_0}(X),\cdot, \circ)$ is a GDN-Poisson subalgebra of  $(C(X), \cdot, \circ)$.
We will prove that $GDNP_{_0}(X)$ is a free \npa\ generated by $X$, i.e.,  $GDNP(X)\cong GDNP_{_0}(X)$, see Lemma \ref{isomorphism}.

\begin{definition}\label{def2.2}
\emph{Let $\Omega=\{\cdot, \circ\}$, where $\cdot, \circ$ are binary operator symbols. A} root function \emph{from the set of words of a free  $\Omega$-algebra $F(\Omega, X\cup \{e\})$
to   $\mathbb{Z}_{\geq 0}$ is defined as follows: for any words $T_1, T_2$,}
 \begin{enumerate}
 \item[\emph{(i)}]\  \emph{$r(a)=0$ for any $a\in X\cup \{e\}$};
\item[\emph{(ii)}]\  \emph{$r(T_1\cdot T_2)=r(T_1)+r(T_2)$};
\item[\emph{(iii)}]\  \emph{$r(T_1\circ T_2)=r(T_1)+1$ if $|T_2|_{\circ}=0$, and
         $r(T_1\circ T_2)=r(T_1)+r(T_2)$  if $|T_2|_{\circ}\geq 1$}.
 \end{enumerate}
\end{definition}
We call $r(T)$ the \textit{root number} of $T$. It is clear that $r(T_1\circ T_2)=r(T_1)+max\{1, r(T_2)\}$.

For any word $T\in  GDNP(X)$, we can
take $T$ as a word in $F(\Omega, X\cup \{e\})$. Then $r(T)$ also makes sense.

\begin{example}{\label{example n-1}}
For any $u_i\in [X], i\geq 1, n\geq 1$, we have
 $r([u_1\circ u_2\circ \cdots\circ u_n]_{_L})=n-1$.
\end{example}

By definition, we have the following lemmas.

\begin{lemma}{\label{right commutative keep} }
For any words  $x,y,z\in F(\Omega, X\cup\{e\})$,  we have
$r((x\circ y)\circ z)=r((x\circ z)\circ y)$ and
$r((x\cdot y)\circ z)=r(x\cdot (y\circ z))$.
\end{lemma}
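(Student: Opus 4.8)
The plan is to reduce everything to the single compact formula $r(T_1\circ T_2)=r(T_1)+\max\{1,r(T_2)\}$ recorded just after Definition~\ref{def2.2}, which already merges the two cases of the root function on a $\circ$-product. The one fact that makes this formula legitimate is that $r(T)\geq 1$ holds exactly when $|T|_{\circ}\geq 1$; equivalently, $r(T)=0$ if and only if $T$ contains no $\circ$. First I would establish this by a routine structural induction on $T$: for $T=a\in X\cup\{e\}$ both sides are $0$; for $T=T_1\cdot T_2$ rule~(ii) gives $r(T)=r(T_1)+r(T_2)$ while $|T|_{\circ}=|T_1|_{\circ}+|T_2|_{\circ}$, so the vanishing of one side forces the vanishing of the other; and for $T=T_1\circ T_2$ rule~(iii) shows $r(T)\geq 1$ and $|T|_{\circ}\geq 1$ simultaneously. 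Consequently, in rule~(iii) the alternative ``$|T_2|_{\circ}=0$'' coincides with ``$r(T_2)=0$,'' and the two branches $r(T_1)+1$ and $r(T_1)+r(T_2)$ are uniformly written as $r(T_1)+\max\{1,r(T_2)\}$.

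With this formula in hand, both identities become one-line computations. For the first, applying the formula twice to each side yields
\[
r((x\circ y)\circ z)=r(x\circ y)+\max\{1,r(z)\}=r(x)+\max\{1,r(y)\}+\max\{1,r(z)\},
\]
and likewise $r((x\circ z)\circ y)=r(x)+\max\{1,r(z)\}+\max\{1,r(y)\}$; the two right-hand sides agree because addition is commutative. For the second identity, rule~(ii) together with the formula gives
\[
r((x\cdot y)\circ z)=r(x\cdot y)+\max\{1,r(z)\}=r(x)+r(y)+\max\{1,r(z)\},
\]
while $r(x\cdot(y\circ z))=r(x)+r(y\circ z)=r(x)+r(y)+\max\{1,r(z)\}$, so the two sides coincide.

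There is essentially no obstacle here: the entire content sits in the case distinction built into the definition of $r$ on a $\circ$-product, and the $\max$-formula dissolves it. The only point requiring a moment's attention is to confirm that the branch selected in rule~(iii) is governed solely by whether the right argument contains a $\circ$ --- a property that is symmetric between $y$ and $z$ in the first identity and is unaffected by multiplying on the left by $x$ in the second. Once the equivalence $r(T)=0\Leftrightarrow|T|_{\circ}=0$ is recorded, neither identity involves any subtlety beyond commutativity of $+$.
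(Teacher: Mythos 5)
Your proof is correct and follows exactly the route the paper intends: the paper offers no written proof (it states the lemma ``by definition''), relying on the formula $r(T_1\circ T_2)=r(T_1)+\max\{1,r(T_2)\}$ it records right after Definition~\ref{def2.2}, which is precisely the tool you use. Your only addition is to justify that formula via the equivalence $r(T)=0\Leftrightarrow|T|_{\circ}=0$, a verification the paper leaves implicit, so your write-up is a careful elaboration of the same argument rather than a different one.
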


\begin{lemma}{\label{multiply keep}}
For any words $T_1,T_2,T\in F(\Omega, X\cup\{e\})$, if
$r(T_1)>r(T_2)$,
then we have
\begin{enumerate}
 \item[(i)]\ $r(T_1\circ T)>r(T_2\circ T)$;
 \item[(ii)]\  $r(T\circ T_1)>r(T\circ T_2)$  if $ r(T_1)>1$, $r(T\circ T_1)=r(T\circ T_2)$  if $ r(T_1)=1$.
 \end{enumerate}
\end{lemma}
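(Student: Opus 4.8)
The plan is to deduce both parts directly from the single closed-form identity $r(T_1\circ T_2)=r(T_1)+\max\{1,r(T_2)\}$ recorded immediately after Definition \ref{def2.2}, which already folds the two cases in clause (iii) of the root function into one. Once this identity is available, neither part needs induction on the structure of the words: everything collapses to elementary arithmetic with $\max$ over the nonnegative integers.

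For part (i), I would simply write $r(T_1\circ T)=r(T_1)+\max\{1,r(T)\}$ and $r(T_2\circ T)=r(T_2)+\max\{1,r(T)\}$. The second summand $\max\{1,r(T)\}$ is identical in both expressions, so $r(T_1\circ T)-r(T_2\circ T)=r(T_1)-r(T_2)>0$ by hypothesis, which is exactly the asserted strict inequality.

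For part (ii), applying the same identity with $T$ on the left gives $r(T\circ T_1)=r(T)+\max\{1,r(T_1)\}$ and $r(T\circ T_2)=r(T)+\max\{1,r(T_2)\}$, so the claim reduces to comparing $\max\{1,r(T_1)\}$ with $\max\{1,r(T_2)\}$. Here I would invoke that $r$ takes values in $\mathbb{Z}_{\geq 0}$. If $r(T_1)>1$, then $\max\{1,r(T_1)\}=r(T_1)$, whereas $r(T_2)\leq r(T_1)-1$ gives $\max\{1,r(T_2)\}\leq\max\{1,r(T_1)-1\}=r(T_1)-1<r(T_1)$, so the strict inequality holds. If instead $r(T_1)=1$, then $0\leq r(T_2)<1$ forces $r(T_2)=0$, whence both maxima equal $1$ and the two root numbers coincide.

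The computation is entirely routine, so I do not anticipate a genuine obstacle; the only subtlety worth flagging is the plateau of $\max\{1,\,\cdot\,\}$ at the value $1$, which is precisely the reason clause (ii) splits into the strict case $r(T_1)>1$ and the equality case $r(T_1)=1$. The integrality of $r$ is what makes the boundary value $r(T_1)=1$ collapse the hypothesis to $r(T_2)=0$, and hence forces equality rather than strict inequality there.
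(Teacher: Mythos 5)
Your proof is correct and matches the paper's intent: the paper states this lemma without proof ("By definition, we have the following lemmas"), and the closed-form identity $r(T_1\circ T_2)=r(T_1)+\max\{1,r(T_2)\}$ you build on is exactly the one the paper records immediately after Definition \ref{def2.2}, so your argument is just the routine verification the authors left implicit. In particular, your observations that integrality forces $r(T_2)=0$ when $r(T_1)=1$, and that the case $r(T_1)=0$ cannot occur under the hypothesis, correctly account for the case split in part (ii).
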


\begin{lemma}\label{as circle}
For any word $T\in GDNP(X)$, if $|T|_{\circ}\geq 1$, then $T=T_1\circ T_2$ for some
words $T_1, T_2\in GDNP(X)$.
\end{lemma}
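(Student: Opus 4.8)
The plan is to induct on $|T|_{_X}$, the number of generators occurring in $T$, and to exploit the first compatibility condition $(x\cdot y)\circ z=x\cdot(y\circ z)$ of a \npa\ together with the commutativity of $\cdot$. Recall that as a formal word in the free $\Omega$-algebra on $X\cup\{e\}$, $T$ has a unique outermost binary operation, and that since $|T|_{\circ}\geq 1$ the word $T$ is not a single generator, so this outermost operation exists.

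First I would dispose of the easy case. If the outermost operation of $T$ is $\circ$, then by the very definition of a word we may write $T=T_1\circ T_2$ with $T_1,T_2\in GDNP(X)$, and there is nothing to prove. Hence the only case to treat is when the outermost operation is $\cdot$, say $T=S_1\cdot S_2$ for words $S_1,S_2\in GDNP(X)$.

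In that case all copies of $\circ$ lie inside the factors, so $|S_1|_{\circ}+|S_2|_{\circ}=|T|_{\circ}\geq 1$, and at least one factor contains a $\circ$. Using the commutativity of $\cdot$ in $GDNP(X)$ I may assume without loss of generality that $|S_2|_{\circ}\geq 1$. Since $|S_2|_{_X}<|T|_{_X}$, the induction hypothesis applies to $S_2$ and produces words $U,V\in GDNP(X)$ with $S_2=U\circ V$ in $GDNP(X)$. Substituting and applying the compatibility identity with $x=S_1$, $y=U$, $z=V$ then gives
$$
T=S_1\cdot S_2=S_1\cdot(U\circ V)=(S_1\cdot U)\circ V,
$$
so that $T=T_1\circ T_2$ with $T_1=S_1\cdot U$ and $T_2=V$, both of which are again words in $GDNP(X)$. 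This closes the induction.

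The computation itself is very short, so the only point that requires care is the bookkeeping of ``words versus elements of the algebra.'' The equalities $S_2=U\circ V$ and the final $T=(S_1\cdot U)\circ V$ hold in $GDNP(X)$ modulo the defining relations, not as formal words; accordingly the induction must be run on the formal word structure of $T$ while the conclusion is read in the quotient algebra. I expect no genuine obstacle beyond keeping this distinction straight and justifying the reduction to $|S_2|_{\circ}\geq 1$ through the commutativity of $\cdot$.
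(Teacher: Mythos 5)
Your core idea is the paper's own: when the outermost operation of $T$ is $\cdot$, use commutativity of $\cdot$ together with the compatibility identity $(x\cdot y)\circ z=x\cdot(y\circ z)$, read from right to left, to pull a $\circ$ from a factor to the outside. However, there is a concrete flaw in your induction: the step ``since $|S_2|_{_X}<|T|_{_X}$, the induction hypothesis applies to $S_2$'' can fail, because $|T|_{_X}$ counts only letters from $X$, while words in $GDNP(X)$ are built from $X\cup\{e\}$. If $S_1$ contains no letter of $X$ at all --- e.g.\ $T=e\cdot\bigl(e\cdot(x\circ y)\bigr)$, where $S_1=e$ and $S_2=e\cdot(x\circ y)$ --- then $|S_2|_{_X}=|T|_{_X}$ and your induction does not advance; the same problem persists after your WLOG swap. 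The repair is easy: induct instead on the length of $T$ as a formal word (the total number of occurrences of letters from $X\cup\{e\}$, or equivalently of operation symbols). Then $S_1$ and $S_2$ are proper subwords of $T$, so whichever factor carries a $\circ$ is strictly shorter, and the rest of your argument goes through verbatim. Your closing remark about words versus elements is correct and is exactly the right discipline: the decomposition $T=S_1\cdot S_2$ is taken on formal words, while the conclusion $T=(S_1\cdot U)\circ V$ holds in the quotient algebra.

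For comparison, the paper avoids induction altogether: using associativity and commutativity of $\cdot$ it writes $T$ as a maximal product $T_1T_2\cdots T_l$ of factors that are not themselves $\cdot$-products, ordered so that $|T_1|_{\circ}\geq\cdots\geq|T_l|_{\circ}$; then $|T|_{\circ}\geq 1$ forces $|T_1|_{\circ}\geq 1$, hence $T_1=T_{11}\circ T_{12}$, and one application of the same identity gives $T=(T_{11}T_2\cdots T_l)\circ T_{12}$ in a single stroke. Your recursive version is equivalent to this once the induction measure is fixed.
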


\begin{proof}
If $T=T_1\circ T_2$, the result follows. Otherwise, we may assume that $T=T_1 T_2\cdots T_l$, where $|T_1|_{\circ}\geq\cdots \geq |T_l|_{\circ}$  and $T_1=T_{11}\circ T_{12}$. Then $T=(T_{11} T_2\cdots T_l)\circ T_{12}$.
\end{proof}

\begin{lemma}{\label{prop n-1}}
For any word $T\in GDNP(X)$ with $|T|_{\circ}= n-1$, we have
$
r(T)\leq |T|_{\circ}
$
and
the equality holds
if and only if
$T=[u_1\circ u_{2}\cdots \circ u_{n}]_{_L}
$  for some
$u_1,u_2,\dots , u_{n}\in [X]$.
\end{lemma}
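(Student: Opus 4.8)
The plan is to run an induction on $n$ (equivalently on $|T|_\circ = n-1$), peeling off the outermost $\circ$ by means of Lemma \ref{as circle}. One direction is already free: if $T = [u_1\circ\cdots\circ u_n]_L$, then Example \ref{example n-1} gives $r(T)=n-1=|T|_\circ$, so equality holds. Hence the genuine content is the inequality $r(T)\le|T|_\circ$ together with the ``only if'' direction, and I intend to extract both from a single induction.

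For the base case $n=1$ (so $|T|_\circ=0$) the word $T$ contains no $\circ$, so parts (i)--(ii) of the root function (Definition \ref{def2.2}) force $r(T)=0=|T|_\circ$; such a $T$ is a $\cdot$-product of elements of $X\cup\{e\}$, hence lies in $[X]$ and is the one-term word $[u_1]_L$. For the inductive step, assume $|T|_\circ=n-1\ge 1$. By Lemma \ref{as circle} I may write $T=T_1\circ T_2$, where $|T_1|_\circ+|T_2|_\circ=n-2$, so the induction hypothesis applies to both factors. Using the rule $r(T)=r(T_1)+\max\{1,r(T_2)\}$ recorded after Definition \ref{def2.2}, together with $r(T_1)\le|T_1|_\circ$, $r(T_2)\le|T_2|_\circ$, and the elementary bound $\max\{1,m\}\le m+1$, I obtain
\[
r(T)=r(T_1)+\max\{1,r(T_2)\}\le |T_1|_\circ+\max\{1,|T_2|_\circ\}\le |T_1|_\circ+|T_2|_\circ+1=|T|_\circ,
\]
which proves the inequality.

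The equality characterization then follows by asking when this chain is tight. Equality $r(T)=|T|_\circ$ forces simultaneously $r(T_1)=|T_1|_\circ$ and $\max\{1,|T_2|_\circ\}=|T_2|_\circ+1$; the second identity holds only for $|T_2|_\circ=0$, so that $T_2$ is $\circ$-free and thus $T_2=u_n\in[X]$. The first identity, fed into the induction hypothesis for $T_1$ (which has $|T_1|_\circ=n-2$), yields $T_1=[u_1\circ\cdots\circ u_{n-1}]_L$ with $u_1,\dots,u_{n-1}\in[X]$. Concatenating, $T=T_1\circ T_2=[u_1\circ\cdots\circ u_{n-1}]_L\circ u_n=[u_1\circ\cdots\circ u_n]_L$, a left-normed row, as claimed.

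The step demanding the most care is the legitimacy of the decomposition $T=T_1\circ T_2$ itself: Lemma \ref{as circle} produces such a factorization only after rewriting $T$ by the compatibility identity $(x\cdot y)\circ z=x\cdot(y\circ z)$ and by the commutativity and associativity of $\cdot$. I must therefore invoke Lemma \ref{right commutative keep}, which certifies that $r$ is invariant under exactly these rewritings (and note that $r$ of a $\cdot$-product is the sum of the $r$-values of its factors, so it is insensitive to the order and bracketing of $\cdot$); this is what makes $r(T)$ and $|T|_\circ$ well defined along the induction. The remaining, purely combinatorial, crux is that the single condition $|T_2|_\circ=0$ is what isolates the extremal left-normed rows: any word whose outermost $\circ$ still carries a further $\circ$ on the right has $|T_2|_\circ\ge 1$, and the computation above then places it \emph{strictly} below the bound.
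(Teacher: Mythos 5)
Your proof is correct and follows essentially the same route as the paper's: induction on $|T|_{\circ}$, decomposing $T=T_1\circ T_2$ via Lemma \ref{as circle} and distinguishing $|T_2|_{\circ}=0$ from $|T_2|_{\circ}\geq 1$, except that you package the paper's two cases into the single formula $r(T)=r(T_1)+\max\{1,r(T_2)\}$, which the paper itself records after Definition \ref{def2.2}. Your explicit remark that $r$ is invariant under the rewritings implicit in Lemma \ref{as circle} (via Lemma \ref{right commutative keep} and additivity of $r$ over $\cdot$-products) is a point the paper leaves tacit, but the substance of the argument is identical.
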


\begin{proof}
By Example \ref{example n-1}, we just need to prove that (i) $r(T)\leq |T|_{\circ}$,
(ii) if $r(T)=|T|_{\circ}$, then
$T=[u_1\circ u_{2}\cdots \circ u_n]_{_L}
$, where each
$u_i\in [X]$.
Induction on $|T|_{\circ}$. If $|T|_{\circ}\leq 1$, the result follows by Lemma \ref{as circle}.
Let $|T|_{\circ}=n>1$ and $T=T_1\circ T_2.$
If $|T_2|_{\circ}\geq 1$, then by induction we have
$
r(T)=r(T_1)+r( T_2)\leq |T_1|_{\circ} +|T_2|_{\circ}=|T|_{\circ}-1<|T|_{\circ}.
$
If $|T_2|_{\circ}=0 $, then by induction we have
$
r(T)=r(T_1)+1\leq |T_1|_{\circ} +1=|T|_{\circ}
$.
Moreover,
$r(T)=|T|_{\circ}$
implies that $T_2=u_n$ and $ T_1=[u_1\circ u_{2}\circ \cdots \circ u_{n-1}]_{_L}$ by induction.
\end{proof}

For any words $T,T'\in GDNP(X)$,
we define
$$
T\rightarrow  T' \ \ \  \mbox{if} \ \ \  T=T'+\sum_{1\leq i\leq  n}\alpha_{i}T_{i}
$$
for some $n\in \mathbb{Z}_{\geq 0}$, words $T_i\in GDNP(X),\alpha_{i}\in k $, such that for any $ 1\leq i\leq  n$, $|T|_{\circ}=|T'|_{\circ}=|T_i|_{\circ},\ |T|_{_X}=|T'|_{_X}=|T_i|_{_X},\  r(T)=r(T')<r(T_i)$.

It is clear that $T\rightarrow  T'\rightarrow  T''\Rightarrow T\rightarrow  T''$.

 For any
 $A=[u_{r}\circ\cdots\circ u_2\circ u_1]_{_R}\in GDNP(X), u_1,\dots, u_r, v_j\in [X]$,
we  define
\begin{eqnarray*}
 A_{\hat{u}_i}&:=& [u_{r}\circ\cdots\circ  u_{i+1}\circ u_{i-1}\circ\cdots\circ u_2\circ u_1]_{_R},\\
 A_{u_i\mapsto v_j} &:=&[u_{r}\circ\cdots\circ  u_{i+1}\circ v_{j}\circ u_{i-1}\circ\cdots\circ u_2\circ u_1]_{_R},\\
 A_{u_i\leftrightarrow u_j} &:=&[u_{r}\circ\cdots\circ  u_{i+1}\circ u_{j}\circ u_{i-1}\circ\cdots\circ u_{j+1}\circ u_{i}\circ u_{j-1} \circ\cdots\circ u_1]_{_R}.
\end{eqnarray*}

\begin{lemma}\label{interchange leaves}
For any
$
A=[u_r\circ \cdots\circ u_2\circ u_1]_{_R}\in GDNP(X), r\geq 3, u_1, \dots, u_r\in [X],
$
we have
$A\rightarrow  A_{u_i\leftrightarrow u_j} $
   and $A\rightarrow  u_j\circ A_{\hat{u}_j}$ for any $r\geq i>j\geq 2$.
\end{lemma}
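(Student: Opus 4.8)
The plan is to reduce both assertions to a single elementary move, namely the transposition of two \emph{neighbouring} leaves of a row, and then to assemble the general statements from such moves using the transitivity of $\to$. First I would record that every row $[u_m\circ\cdots\circ u_1]_{_R}$ (with $m\geq 2$, $u_t\in[X]$) has root number $1$: this follows from $r(T_1\circ T_2)=r(T_1)+\max\{1,r(T_2)\}$ by induction on the number of leaves, since $r(u_2\circ u_1)=0+\max\{1,0\}=1$ and each further outer leaf contributes $0+\max\{1,1\}=1$. In particular $r(A)=1$, and the same holds for every rearrangement of the leaves of $A$. Thus in each instance of $\to$ that I produce, the two leading words will automatically have equal root number $1$, and I will only have to check that the correction terms have root number $\geq 2$ and that $|\cdot|_{\circ}$ and $|\cdot|_{_X}$ are preserved (both of which are transparent, since left symmetry is multilinear of fixed $\circ$-degree and leaf content).

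Next I would establish the adjacent transposition. Fix $2\leq k\leq r-1$, put $w:=[u_{k-1}\circ\cdots\circ u_1]_{_R}$, and observe that $A$ contains the subword $u_{k+1}\circ(u_k\circ w)$ in its innermost position, i.e. $A=u_r\circ(\cdots\circ(u_{k+2}\circ(u_{k+1}\circ(u_k\circ w)))\cdots)$. Applying left symmetry with $x=u_{k+1}$, $y=u_k$, $z=w$ gives
$$
u_{k+1}\circ(u_k\circ w)=u_k\circ(u_{k+1}\circ w)+(u_{k+1}\circ u_k)\circ w-(u_k\circ u_{k+1})\circ w.
$$
Since $w$ is a row we have $r(w)\leq 1$, so the two leading words have root number $1$ while the two correction words have root number $1+\max\{1,r(w)\}=2$; all four words share the same $|\cdot|_{\circ}$ and $|\cdot|_{_X}$. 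Hence $u_{k+1}\circ(u_k\circ w)\to u_k\circ(u_{k+1}\circ w)$. I would then propagate this through the outer shell $u_r\circ(\cdots(u_{k+2}\circ[\,\cdot\,])\cdots)$ one leaf at a time: applying Lemma \ref{multiply keep}(ii) with the left factors $u_{k+2},\dots,u_r$ (each of root number $0$), the two leading words keep root number $1$ and each correction keeps root number $\geq 2$, yielding the adjacent transposition $A\to A_{u_{k+1}\leftrightarrow u_k}$ for every $2\leq k\leq r-1$.

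With the adjacent transposition in hand, both displayed assertions follow by ``bubble--sorting'' together with transitivity of $\to$. For $A\to A_{u_i\leftrightarrow u_j}$ with $r\geq i>j\geq 2$, I would write the transposition $(i\,j)$ as a product of adjacent transpositions acting only on the positions $\{j,j+1,\dots,i\}\subseteq\{2,\dots,r\}$; every intermediate word is again a row (hence of root number $1$), each single step is of the form just established, and transitivity of $\to$ delivers the claim, with the leaf $u_1$ never moved because $j\geq 2$. For $A\to u_j\circ A_{\hat{u}_j}$, I would bubble the leaf $u_j$ from position $j$ up to the outermost slot by transposing it successively with $u_{j+1},u_{j+2},\dots,u_r$; the resulting row is exactly $[u_j\circ u_r\circ\cdots\circ u_{j+1}\circ u_{j-1}\circ\cdots\circ u_1]_{_R}=u_j\circ A_{\hat{u}_j}$, and transitivity again finishes the argument.

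The delicate point, and the only place where anything beyond routine combinatorics occurs, will be the propagation of a single transposition through the outer leaves: I must ensure that the root-$2$ corrections remain \emph{strictly} above the root-$1$ leading words after each left multiplication by $u_{k+2},\dots,u_r$. This is exactly what Lemma \ref{multiply keep}(ii) guarantees, and it is here that it matters that a row has root number $1$ rather than $0$; had the leading subword carried root number $0$, a root-$1$ correction could survive at the same level and violate the strict inequality $r(T)=r(T')<r(T_i)$ demanded by the definition of $\to$. Once this bookkeeping is secured, the permutation part of the argument is entirely mechanical.
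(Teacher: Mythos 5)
Your proof is correct and takes essentially the same route as the paper's: an adjacent transposition of leaves produced by left symmetry, whose two correction terms have root number $2$ against leading words of root number $1$, then propagated through the outer leaves and composed via transitivity of $\rightarrow$ into arbitrary transpositions $A\rightarrow A_{u_i\leftrightarrow u_j}$ and the extraction $A\rightarrow u_j\circ A_{\hat{u}_j}$. The only difference is presentational: you spell out the root-number bookkeeping (rows have root number $1$, corrections have root number $2$, Lemma~\ref{multiply keep}(ii) preserves the strict gap under left multiplication), where the paper simply writes the left-symmetry identity and chains the arrows.
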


\begin{proof}
Since
\begin{eqnarray*}
A&=&[u_r\circ\cdots\circ (u_{j+1}\circ u_{j})\circ u_{j-1}\circ\cdots\circ u_1]_{_R}
+[u_r\circ\cdots\circ u_{j}\circ u_{j+1}\circ u_{j-1}\circ\cdots\circ  u_1]_{_R}\\
&&-[u_r\circ\cdots\circ(u_{j}\circ u_{j+1})\circ u_{j-1}\circ\cdots\circ  u_1]_{_R},
\end{eqnarray*}
we have
$
A\rightarrow [u_r\circ\cdots\circ u_{j}\circ u_{j+1}\circ u_{j-1}\circ\cdots\circ  u_1]_{_R}$. By a similar argument, we have
\begin{eqnarray*}
A&\rightarrow&[u_r\circ\cdots\circ u_{j}\circ u_{j+1}\circ u_{j-1}\circ\cdots\circ  u_1]_{_R}\\
&\rightarrow&\cdots\\
&\rightarrow&[u_r\circ\cdots\circ u_{i+1}\circ u_{j}\circ u_{i}\circ u_{i-1}\circ\cdots u_{j+1}\circ u_{j-1}\circ\cdots\circ  u_1]_{_R}\\
&\rightarrow&[u_r\circ\cdots\circ u_{i+1}\circ u_{j}\circ u_{i-1}\circ u_{i}\circ\cdots u_{j+1}\circ u_{j-1}\circ\cdots\circ  u_1]_{_R}\\
&\rightarrow&\cdots\\
&\rightarrow&[u_r\circ\cdots\circ u_{i+1}\circ u_{j}\circ u_{i-1}\circ\cdots u_{j+1}\circ u_{i}\circ u_{j-1}\circ\cdots\circ  u_1]_{_R}.
\end{eqnarray*}
Similarly, we have $A\rightarrow  u_j\circ A_{\hat{u}_j}$ for any $j\geq 2$.
\end{proof}

\begin{lemma}\label{interchange two row leaves}
Let
$A=[u_{r+1}\circ\cdots\circ u_2\circ u_1]_{_R}$,
$B= [v_m\circ\cdots\circ v_2\circ v_1]_{_R} \in GDNP(X)
$.
Then
$
A\circ B\rightarrow A_{u_i\mapsto v_j}\circ B_{v_j\mapsto u_i}.
$
Moreover, let
$
T=b_1\cdots b_m\cdot[u_{1,r_1+1}\circ A_1 \circ A_{2}\circ \cdots \circ  A_{n}]_{_L},
$
where each
$
A_{i}=[u_{i,  r_i}\circ \cdots \circ   u_{i,  2}\circ u_{i,  1}]_{_R},
   \  1\leq i \leq n
$.
Then $T\rightarrow (T)_{u_{i,j}\leftrightarrow u_{p,l}},\  j,l\geq 2$ and $T\rightarrow (T)_{b_{t}\leftrightarrow u_{i,j}},\  j\geq 2,\ 1\leq t\leq m$.
\end{lemma}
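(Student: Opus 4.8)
The plan is to build everything from a single clean ``front-leaf'' exchange between two rows and then propagate it through the tableau using right commutativity and the compatibility of $\rightarrow$ with the operations.

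\textit{Part 1.} First I would do the case in which the exchanged leaves are the front leaves of each row, i.e. $u_i=u_{r+1}$ and $v_j=v_m$. Writing $A=u_{r+1}\circ P$, $B=v_m\circ Q$ with $P=[u_r\circ\cdots\circ u_1]_R$ and $Q=[v_{m-1}\circ\cdots\circ v_1]_R$, I apply right commutativity to push $P$ to the end, then left symmetry to swap the two front leaves, then right commutativity again:
\begin{eqnarray*}
A\circ B &=& (u_{r+1}\circ P)\circ(v_m\circ Q)\\
&=& (u_{r+1}\circ(v_m\circ Q))\circ P\\
&=& (v_m\circ(u_{r+1}\circ Q))\circ P+\big[(u_{r+1}\circ v_m)\circ Q-(v_m\circ u_{r+1})\circ Q\big]\circ P\\
&=& (v_m\circ P)\circ(u_{r+1}\circ Q)+\big[(u_{r+1}\circ v_m)\circ Q-(v_m\circ u_{r+1})\circ Q\big]\circ P.
\end{eqnarray*}
The first term is exactly $A_{u_{r+1}\mapsto v_m}\circ B_{v_m\mapsto u_{r+1}}$, while the two correction terms contain the sub-word $u_{r+1}\circ v_m$ (resp. $v_m\circ u_{r+1}$), which raises the root number from $2$ to $3$; hence $A\circ B\rightarrow A_{u_{r+1}\mapsto v_m}\circ B_{v_m\mapsto u_{r+1}}$. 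For general non-bottom $i,j\ (\geq 2)$ I would first use Lemma \ref{interchange leaves} to bring $u_i,v_j$ to the fronts of $A,B$, apply the front-leaf exchange, and move them back to positions $i,j$. To pass these row-internal rewritings through the outer product I use that $\rightarrow$ is compatible with $\circ$: Lemma \ref{multiply keep}(i) preserves the strict root inequalities under left multiplication, and Lemma \ref{multiply keep}(ii) does so under right multiplication, since the correction terms of a row-rewriting have root number at least $2>1$.

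\textit{Part 2, exchange between two rows.} After merging the head into $A_1$ so that $w=[\widetilde A_1\circ A_2\circ\cdots\circ A_n]_L$ is a left-normed product of rows (here $\widetilde A_1=u_{1,r_1+1}\circ A_1$), and after noting that $\rightarrow$ is compatible with the outer multiplication by $b_1\cdots b_m$ (the root function is additive on $\cdot$ and each $b_t$ has root $0$), it suffices to prove $w\rightarrow w_{u_{i,j}\leftrightarrow u_{p,l}}$. Right commutativity permutes $A_2,\dots,A_n$ freely while keeping the root number (Lemma \ref{right commutative keep}), so I may place the two relevant rows adjacently. When one of them is the head row $\widetilde A_1$, I bring the other to the second position, apply the Part 1 exchange to $\widetilde A_1\circ A_k$, and let the remaining rows ride along by applying Lemma \ref{multiply keep}(i) repeatedly (the rewritten subword is always the left operand of the next $\circ$). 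When both rows are non-head, I cannot make them the sole arguments of the top $\circ$, because $\widetilde A_1$ is frozen at the front; I would instead conjugate through an auxiliary non-bottom leaf $h'$ of $\widetilde A_1$, realizing the transposition $(u_{i,j}\ u_{p,l})$ as $(h'\ u_{i,j})(h'\ u_{p,l})(h'\ u_{i,j})$, each factor being a head-row exchange already treated.

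\textit{Part 2, exchange of $b_t$ with $u_{i,j}$.} Here I would use the compatibility identity $(x\cdot y)\circ z=x\cdot(y\circ z)$ as an \emph{exact} rewriting to pass generators between the external $\cdot$-part and the front leaf of the head row. Pushing $b_t$ into the head-front leaf and pulling the original head content out as an external factor turns an exchange of $b_t$ with a non-bottom leaf of the head row into a plain interchange of two non-bottom leaves inside one row, which is Lemma \ref{interchange leaves}. For $u_{i,j}$ in a non-head row I again conjugate through a head-row leaf $h'$, writing $(b_t\ u_{i,j})=(b_t\ h')(h'\ u_{i,j})(b_t\ h')$ and composing this head-row case with the row-to-row exchange above; transitivity of $\rightarrow$ then gives the claim. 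The clean part is Part 1; the crux is the localization in Part 2, since the only tool that breaks a row open is left symmetry and it requires the target row to occupy the right-hand ``$y\circ z$'' slot, which the frozen head forbids for a direct two-non-head exchange. This is precisely why the conjugation trick is needed, and why the essential technical check is that every correction term stays strictly higher in root number (via Lemma \ref{multiply keep}) as it is carried through the surrounding product.
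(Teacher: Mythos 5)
Your proof is correct, but it takes a genuinely different route from the paper's in Part 2, and the difference is instructive. For the first claim your decomposition (explicit front-leaf exchange via left symmetry and right commutativity, then reduction of general $i,j\geq 2$ to the front-leaf case by Lemma \ref{interchange leaves}) is sound, and your bookkeeping is right: the correction terms $[(u_{r+1}\circ v_m)\circ Q]\circ P$ have root number $3$ against $2$ for the main terms, and Lemma \ref{multiply keep} carries the rewritings through the outer $\circ$. The paper instead pulls $u_i$ out of $A$ by Lemma \ref{interchange leaves}, uses right commutativity to form $(u_i\circ B)\circ A_{\hat{u}_i}$, exchanges $u_i$ and $v_j$ \emph{inside} the single row $u_i\circ B$, and reassembles, so left symmetry never appears at the top level. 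In Part 2 your premise that $\widetilde A_1$ is ``frozen at the front'' is false, and this is precisely the observation the paper exploits: by right commutativity $T=b_1\cdots b_m\cdot[u_{1,r_1+1}\circ A_i\circ A_p\circ A_1\circ\cdots]_{_L}$, and the head leaf then \emph{merges} with whichever row comes first, so $(u_{1,r_1+1}\circ A_i)\circ A_p$ is a product of two rows to which Part 1 applies directly; similarly, after passing $b_t$ into the head slot via $(x\cdot y)\circ z=x\cdot(y\circ z)$, the row $b_t\circ A_i$ contains both $b_t$ and $u_{i,j}$, so Lemma \ref{interchange leaves} settles the $b_t\leftrightarrow u_{i,j}$ case in one stroke. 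Your workaround — realizing the transposition $(u_{i,j}\ u_{p,l})$ as $(h'\ u_{i,j})(h'\ u_{p,l})(h'\ u_{i,j})$ through a head-row leaf $h'$, and analogously for $(b_t\ u_{i,j})$ — is rigorous, since the exchanges are position-based, $h'=u_{1,r_1+1}$ sits at position $r_1+1\geq 2$ so it always exists, and transitivity of $\rightarrow$ composes the three steps; but it costs three exchanges where the paper needs one, and it hides the structural point that ``which row carries the head'' is an artifact of the bracketing, not an invariant of $T$.
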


\begin{proof}
By Lemma \ref{interchange leaves}, we get
$$
A\circ B\rightarrow (u_i\circ A_{\hat{u}_i})\circ B
\rightarrow(u_i\circ B)\circ A_{\hat{u}_i}
\rightarrow
(v_j\circ B_{v_j\mapsto u_i})\circ A_{\hat{u}_i}
\rightarrow
(v_j\circ A_{\hat{u}_i})\circ B_{v_j\mapsto u_i}
\rightarrow
A_{u_i\mapsto v_j}\circ B_{v_j\mapsto u_i}.
$$
Since
$
T=b_1\cdots b_m\cdot [u_{1,r_1+1}\circ A_1 \circ A_{2}\circ \cdots \circ  A_{n}]_{_L}
=b_1\cdots b_m\cdot [u_{1,r_1+1}\circ A_i \circ A_{p}\circ A_1 \circ \cdots \circ \hat{A_{i}}\circ \cdots \circ \hat{A_{p}}\circ \cdots \circ  A_{n}]_{_L}
$ and
\begin{eqnarray*}
T&=&b_1\cdots\hat{b_t}\cdots b_m u_{1,r_{1}+1}\cdot [b_t \circ A_1 \circ A_{2}\circ \cdots \circ  A_{n}]_{_L}\ (\mbox{if } (i,j)=(1,r_1+1), \mbox{ we are done.} )\\
&\rightarrow& b_1\cdots\hat{b_t}\cdots b_m u_{1,r_{1}+1}\cdot [b_t \circ A_i  \circ A_1 \circ \cdots \circ \hat{A_{i}}\circ \cdots \circ  A_{n}]_{_L}\\
&\rightarrow& b_1\cdots\hat{b_t}\cdots b_m u_{1,r_{1}+1}\cdot [u_{i,j} \circ (A_i)_{u_{i,j}\mapsto b_t}  \circ A_1 \circ \cdots \circ \hat{A_{i}}\circ \cdots \circ  A_{n}]_{_L}\\
&\rightarrow& b_1\cdots\hat{b_t}\cdots b_m u_{i,,j}\cdot [u_{1,r_1+1} \circ (A_i)_{u_{i,j}\mapsto b_t}  \circ A_1 \circ \cdots \circ \hat{A_{i}}\circ \cdots \circ  A_{n}]_{_L}\\
&\rightarrow& b_1\cdots\hat{b_t}\cdots b_m u_{i,,j}\cdot [u_{1,r_1+1} \circ A_1 \circ \cdots \circ (A_i)_{u_{i,j}\mapsto b_t} \circ \cdots \circ  A_{n}]_{_L},
\end{eqnarray*}
the result follows immediately.
\end{proof}

Define  the \textit{deg-lex order} on $[X]$ as follows:
for any $u=x_1\cdots x_n,\ v=y_1\cdots y_m\in [X],$ $x_1\geq \cdots \geq x_n,\ y_1\geq \cdots \geq y_m,$  where each $x_i, \ y_j\in X$,
$
u>v  \mbox{ if and only if }n>m,  \mbox{ or }
 n=m \mbox{ and for some } p\leq n, x_p>y_p, x_i=y_i \mbox{ for all } 1\leq i\leq p-1.$

\begin{lemma}\label{Novikov tableaux}
For any word $T\in GDNP(X)$, we have $T=\sum \alpha_{l}T_{l}$,
where each $T_l$ is of the following form:
$
T_l=[u_{1,r_1+1}\circ A_1 \circ A_{2}\circ \cdots \circ  A_{n}]_{_L}
$, where each
$
A_{i}=[u_{i,  r_i}\circ \cdots \circ   u_{i,  2}\circ u_{i,  1}]_{_R},
  \   1\leq i \leq n,
 $   $u_{i,j}\in [X]$,
such that the following conditions hold:
\begin{enumerate}
 \item [(i)]\ $r(T_l)\geq r(T), \ |T_l|_{\circ}=|T|_{\circ},\ |T_l|_{_X}=|T|_{_X}$;

 \item [(ii)]\ $r_1\geq  \cdots \geq r_n$;

 \item [(iii)]\  If $r_i=r_{i+1}$, then $ u_{i,1}\geq u_{i+1,1}$ by deg-lex order on $[X]$;
 \item [(iv)]\  $u_{1,r_{1}+1}\geq \cdots \geq u_{1,2}\geq u_{2,r_{2}}\geq \cdots \geq u_{2,2}\geq \cdots \geq u_{n,r_{n}}\geq \cdots \geq u_{n,2}$ by deg-lex order on $[X]$.
 \end{enumerate}
\end{lemma}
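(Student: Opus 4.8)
The plan is to construct the standard form in two stages and to control the whole process by a single induction on the quantity $d(T):=|T|_{\circ}-r(T)$, which is a nonnegative integer by Lemma \ref{prop n-1}. The key structural fact is that both $|T|_{\circ}$ and $|T|_{_X}$ are left unchanged by every identity of a GDN-Poisson algebra, and that the two families of moves I intend to use behave differently with respect to $r(T)$: the identity $x\cdot(y\circ z)=(x\cdot y)\circ z$ together with associativity and commutativity of $\cdot$ and right commutativity of $\circ$ are \emph{exact} rewritings that never decrease the root number (Lemma \ref{right commutative keep}), whereas the interchange relations of Lemmas \ref{interchange leaves} and \ref{interchange two row leaves} replace a word by a target word of the \emph{same} root number plus error words of \emph{strictly larger} root number. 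Since $r$ is bounded by $|T|_{\circ}$, every error word has strictly smaller $d$, so by induction it is already a combination of standard words; only the target word, of unchanged $d$, needs an inner argument. This immediately secures condition (i), because all words produced have the same $|T|_{\circ},|T|_{_X}$ and root number $\geq r(T)$.

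First I would fix the shape. Applying Lemma \ref{as circle} to peel off a top $\circ$, then pushing every $\cdot$ inward via $x\cdot(y\circ z)=(x\cdot y)\circ z$ (read so that an outer $\cdot$-factor is absorbed into the left $\circ$-argument), and merging $\circ$-free factors into single monomials of $[X]$ by associativity and commutativity of $\cdot$, I rewrite $T$ as a linear combination of left-normed products of right-normed rows $[u_{1,r_1+1}\circ A_1\circ\cdots\circ A_n]_{_L}$ with all leaves $u_{i,j}\in[X]$. This stage uses only exact, root-number-preserving identities, so it does not interfere with the induction.

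Next I would sort. By right commutativity the rows $A_1,\dots,A_n$ following the head $u_{1,r_1+1}$ may be permuted \emph{exactly}, so I reorder them by non-increasing length to obtain (ii), using the pull-out $A\rightarrow u_j\circ A_{\hat{u}_j}$ of Lemma \ref{interchange leaves} to promote a leaf to the head whenever the longest row must become the first row. I then drive the leaves $u_{i,j}$ with $j\geq 2$ into the global descending order (iv) and the innermost leaves $u_{i,1}$ of equal-length rows into the order (iii), using the within-row swap $A\rightarrow A_{u_i\leftrightarrow u_j}$ and the between-row transfers of Lemma \ref{interchange two row leaves}. Each such step is of the second kind above: the target word keeps the root number and the error words have strictly larger root number, hence fall under the induction hypothesis.

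The principal obstacle is the termination and mutual compatibility of the sorting stage. I must produce a well-founded monovariant — for example a lexicographic combination of the tuple of row lengths with the number of inversions of the leaf sequence relative to the target order of (iv) and (iii) — that strictly decreases under every interchange move while $r(T)$ is held fixed, and I must verify that the moves achieving (ii), (iii) and (iv) do not undo one another: that reordering rows by length does not disturb an already-sorted leaf sequence, that promoting a leaf to the head is compatible with the global order, and that the between-row transfers of Lemma \ref{interchange two row leaves} terminate. Checking these interactions, and checking at each step that the resulting words genuinely have the asserted shape with unchanged $|T|_{\circ}$ and $|T|_{_X}$, is where the real work lies; the base case $d(T)=0$ is immediate, since Lemma \ref{prop n-1} then forces $T$ to be a single left-normed row, which is already of the required form.
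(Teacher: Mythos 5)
The fatal gap is in your first stage. The exact identities you allow yourself---associativity and commutativity of $\cdot$, the compatibility $(x\cdot y)\circ z=x\cdot(y\circ z)$, and right commutativity $(x\circ y)\circ z=(x\circ z)\circ y$---can merge $\cdot$-factors and permute right arguments, but none of them re-associates a $\circ$-product, so they cannot convert a left-normed $\circ$-structure sitting inside a right argument into the right-normed rows your target shape demands. Take $T=x\circ((y\circ z)\circ w)$ with $x,y,z,w\in X$: exact moves only yield $x\circ((y\circ w)\circ z)$, never a word of row form, and the only way forward is left symmetry,
\[
x\circ((y\circ z)\circ w)=(x\circ(y\circ z))\circ w+(y\circ z)\circ(x\circ w)-((y\circ z)\circ x)\circ w .
\]
Here $r(T)=2$, and the first two terms on the right also have root number $2$; only the third has root number $3$. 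So the indispensable move is neither ``exact'' nor of your ``one target plus strictly-higher-root errors'' form, and your induction on $d(T)=|T|_{\circ}-r(T)$ says nothing about the two equal-root terms. Your two-move framework simply does not cover the identity that does the real structural work, and the claim that the shape-fixing stage uses only exact identities is false.

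This is compounded by the inadequacy of the single induction parameter: $d(T'\circ u)=d(T')$ for $u\in[X]$, and, as the example shows, left symmetry produces terms of unchanged $d$, so the ``inner argument'' you defer for targets of unchanged $d$ is exactly where circularity sits---and the monovariant you promise for it is never constructed (you concede this is ``where the real work lies''). The paper resolves both problems with a nested induction: outer on $|T|_{\circ}$, inner on $|T|_{\circ}-r(T)$. It writes $T=T_1\circ T_2$ (Lemma \ref{as circle}), standardizes $T_1$ and $T_2$ by the \emph{outer} induction (legitimate because $|T_i|_{\circ}<|T|_{\circ}$), and only then recombines the standardized pieces, using Lemma \ref{multiply keep} to keep all root numbers $\geq r(T)$, an inner induction on the number of rows of the right-hand factor, and left symmetry precisely in the residual case $|T_i|_{\circ}=0$, $q>1$, every resulting term being caught by one of the two inductions. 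Your second stage (sorting via Lemmas \ref{interchange leaves} and \ref{interchange two row leaves}, with rows permuted exactly by right commutativity) is sound in outline and matches the paper's use of those lemmas, but without a correct shape-fixing stage and a workable induction scheme the proposal does not constitute a proof.
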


\begin{proof}
Induction on $|T|_{\circ}$. If $|T|_{\circ}=0 \mbox{ or } 1$,
then the result follows by Lemma \ref{as circle}.
Suppose that the result holds for any $T$ with $|T|_{\circ}<t$.
 Let $|T|_{\circ}=t\geq 2$.
 Induction on $|T|_{\circ}-r(T)$.

  If $|T|_{\circ}-r(T)=0$, then $r(T)=t\geq 2$.
 By Lemma \ref{prop n-1},
 we get
$T=[u_1\circ u_{2}\circ \cdots \circ u_{t+1}]_{_L}$.
By right commutativity, the result follows.

If $|T|_{\circ}-r(T)\geq 1$, then by Lemma \ref{as circle}, we have
$T=T_1\circ T_2$ for some words $T_1,T_2\in GDNP(X)$.
By induction, we have
$T_1=\sum \alpha_{i}T_{i},\ T_2=\sum \beta_{j}T_{j}$, where $\alpha_{i}, \beta_{j}\in k$ and
 $T_{i}, T_{j}$ satisfy the claimed conditions.
Then
$
r(T_i\circ T_j)\geq r(T_i\circ T_2)\geq r(T_1\circ T_2)=r(T)
$
by Lemma \ref{multiply keep}.
Say
$
T_i=[u\circ A_1  \circ A_{2} \circ \cdots \circ  A_{p}]_{_L},
$
$
T_j=[v\circ B_1  \circ B_{2} \circ \cdots \circ  B_{q}]_{_L},
$
where $p,q\geq 0,$ and each $ A_m, B_{m^\prime}$ is a  row.

If $|T_{i}|_{\circ}\geq 1,$
then $p\geq 1$. Induction on $q$.
If $q=0$ or $1$, then by right commutativity,
Lemmas \ref{right commutative keep},\ \ref{interchange leaves} and  \ref{interchange two row leaves} and induction hypothesis, the result follows.
If $q>1$, then
$
T_i\circ T_{j}=[u\circ A_1  \circ A_{2} \circ \cdots \circ  A_{p-1} \circ T_j \circ A_p]_{_L}.
$
By induction,
$
[u\circ A_1  \circ A_{2} \circ \cdots \circ  A_{p-1} \circ T_j]_{_L}=
\sum\alpha_{l}^{\prime}T_l^{\prime}, \ r(T_l^{\prime})\geq p-1+q
$.
So
$
r(T_l^{\prime}\circ A_p)\geq p+q=r(T_i\circ T_j)\geq r(T).
$
Since
$
|T_l^{\prime}|_{\circ}\geq 1$ and $ r(A_p)=1,
$
the result follows by the above argument.

If $|T_i|_{\circ}=0$, then
$
|T_j|_{\circ}\geq 1,\  q\geq 1.
$
If $q=1$, by Lemma \ref{interchange leaves} and induction hypothesis, the result follows.
If $q>1$, then
$
T_i\circ T_{j}
=(T_{i}\circ [v\circ B_1  \circ B_{2} \circ \cdots \circ B_{q-1}]_{_L})\circ B_{q}
+
[v\circ B_1  \circ B_{2} \circ \cdots \circ B_{q-1}]_{_L}\circ(T_i\circ B_q)
-
[v\circ B_1  \circ B_{2} \circ \cdots \circ  B_{q-1}\circ T_i]_{_L}\circ B_q$. By induction and the above argument for the case $|T_i|_{\circ}\geq1$, the result follows.
\end{proof}

\begin{lemma}\label{root 1}
For any $u\in [X]$, $a_1,a_2, \dots ,a_n\in X, n\geq 1$,  we have
$$
u\circ(a_1\cdots a_n)=\sum_{1\leq i\leq n}(u a_1\cdots \hat{a_i}\cdots a_n)\circ a_i  -(n-1)(u a_1\cdots a_n)\circ e.
$$
\end{lemma}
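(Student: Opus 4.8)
The plan is to prove the identity by induction on $n$, the engine being a single Leibniz-type identity that removes one factor from the right slot of $\circ$. First I would establish that in any \npa\ with unit $e$ the identity
\[
x\circ(y\cdot z)=(x\cdot z)\circ y+(x\cdot y)\circ z-(x\cdot y\cdot z)\circ e
\]
holds for arbitrary elements $x,y,z$. Because the compatibility condition $(p\cdot q)\circ r=p\cdot(q\circ r)$ lets me pull $x$ out of every term (e.g. $(x\cdot z)\circ y=x\cdot(z\circ y)$, and $x\circ(y\cdot z)=(x\cdot e)\circ(y\cdot z)=x\cdot(e\circ(y\cdot z))$), this displayed identity is equivalent to its special case $x=e$, namely
\[
e\circ(y\cdot z)=y\circ z+z\circ y-(y\cdot z)\circ e.
\]

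For this special case I would substitute $x=e$ into the compatibility condition $(x\circ y)\cdot z-x\circ(y\cdot z)=(y\circ x)\cdot z-y\circ(x\cdot z)$ and use $e\cdot z=z$ to obtain
\[
e\circ(y\cdot z)=(e\circ y)\cdot z-(y\circ e)\cdot z+y\circ z.
\]
Then $(p\cdot q)\circ r=p\cdot(q\circ r)$ together with commutativity of $\cdot$ collapses the two mixed terms: $(e\circ y)\cdot z=z\cdot(e\circ y)=(z\cdot e)\circ y=z\circ y$, and $(y\circ e)\cdot z=z\cdot(y\circ e)=(z\cdot y)\circ e=(y\cdot z)\circ e$. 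Substituting these back gives the $x=e$ case at once, hence the general Leibniz-type identity.

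With that identity in hand, the induction is routine. The base case $n=1$ is immediate. For the step I would apply the Leibniz-type identity with $y=a_1\cdots a_{n-1}$ and $z=a_n$, giving
\[
u\circ(a_1\cdots a_n)=(u\,a_n)\circ(a_1\cdots a_{n-1})+(u\,a_1\cdots a_{n-1})\circ a_n-(u\,a_1\cdots a_n)\circ e,
\]
and then apply the induction hypothesis to $(u\,a_n)\circ(a_1\cdots a_{n-1})$, with $u\,a_n$ playing the role of $u$. The term $(u\,a_1\cdots a_{n-1})\circ a_n$ supplies the missing $i=n$ summand, while the two $\circ e$ contributions carry coefficients $-(n-2)$ and $-1$, which add to $-(n-1)$; this is exactly the asserted formula.

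The main obstacle is the first step: checking that the Leibniz-type identity really is a consequence of the \npa\ axioms, and not merely of the differential model $x\circ y=x\ast Dy$. The reduction to the case $x=e$ via $(p\cdot q)\circ r=p\cdot(q\circ r)$ is what makes this tractable, and the decisive cancellation is that the unit $e$ converts the asymmetric-looking term $(y\circ e)\cdot z$ into $(y\cdot z)\circ e$. This is precisely the point at which the standing assumption that $\cdot$ carries a unit $e$ is used.
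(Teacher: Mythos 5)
Your proposal is correct and follows essentially the same route as the paper: the paper's induction step inlines exactly your Leibniz-type identity, deriving it on the fly by writing $u\circ(a_1\cdots a_n)=u\cdot(e\circ(a_1\cdots a_n))$, substituting $x=e$ into the compatibility condition $(x\circ y)\cdot z-x\circ(y\cdot z)=(y\circ x)\cdot z-y\circ(x\cdot z)$, and then pulling $u$ back inside via $(p\cdot q)\circ r=p\cdot(q\circ r)$, after which the same induction hypothesis and the same $-(n-2)-1=-(n-1)$ bookkeeping finish the argument. Isolating the identity $x\circ(y\cdot z)=(x\cdot z)\circ y+(x\cdot y)\circ z-(x\cdot y\cdot z)\circ e$ as a preliminary step is a purely organizational difference, not a different method.
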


\begin{proof}
Induction on $n$. If $n=1$, the result follows. If $n>1$,  then
\begin{eqnarray*}
&&u\circ(a_1\cdots a_n)=u\cdot (e \circ(a_1\cdots a_n) )\\
&=& u\cdot ( (e\circ (a_1\cdots a_{n-1}))\cdot a_n)
+u\cdot ( (a_1\cdots a_{n-1})\circ( e\cdot a_n))
-u\cdot ( ((a_1\cdots a_{n-1})\circ e)\cdot a_n)\\
&=& ua_n\circ (a_1\cdots a_{n-1})
+ (ua_1\cdots a_{n-1})\circ a_n
- (ua_1\cdots a_{n-1}a_n)\circ e\\
&=&\sum_{1\leq i\leq n-1}(u a_1\cdots \hat{a_i}\cdots a_n)\circ a_i -(n-1)(u a_1\cdots a_n)\circ e + (ua_1\cdots a_{n-1})\circ a_n\\
&=&\sum_{1\leq i\leq n}(u a_1\cdots \hat{a_i}\cdots a_n)\circ a_i  -(n-1)(u a_1\cdots a_n)\circ e.
\end{eqnarray*}
\end{proof}

\begin{lemma}\label{root max}
For any $T=[u_{1,2}\circ u_{1,1}\circ u_{2,1}\circ\cdots \circ u_{n,1}]_{_L}\in GDNP(X)$ where $u_{1,2}$, $u_{1,1}$, $\dots $, $u_{n,1}$ $\in [X], n\geq 1$,  we have
$T=\sum\alpha_lT_l$
where each $T_l$ is a \npt\ with $r(T_l)=r(T)=|T_l|_{\circ}=|T|_{\circ}, \ |T_l|_{_X} =|T|_{_X} $.
\end{lemma}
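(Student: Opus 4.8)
The plan is to argue by induction on the quantity $M:=|u_{1,1}|_{_X}+\cdots+|u_{n,1}|_{_X}$, the total number of letters of $X$ appearing in the right factors $u_{1,1},\dots,u_{n,1}$ of $T$. Since $T$ is the left-normed $\circ$-product of the $n+1$ words $u_{1,2},u_{1,1},\dots,u_{n,1}\in[X]$, Example~\ref{example n-1} gives $r(T)=n=|T|_{\circ}$, which by Lemma~\ref{prop n-1} is the maximal value the root number can take for a word with $n$ occurrences of $\circ$. This has two useful consequences. First, I will only rewrite $T$ by means of honest identities of the algebra—Lemma~\ref{root 1}, right commutativity, and the compatibility identity $(x\cdot y)\circ z=x\cdot(y\circ z)$—none of which invokes left symmetry, so no higher-root ``error'' terms of the kind appearing in Lemma~\ref{Novikov tableaux} are ever created. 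Second, all of these moves visibly preserve the number of $\circ$'s and the number of letters, so every intermediate word keeps $|\,\cdot\,|_{\circ}=n$ and $|\,\cdot\,|_{_X}=|T|_{_X}$; it will therefore suffice to land in \npt\ shape, since a \npt\ with $n$ rows all of length one automatically has root number $n$.

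The inductive engine is a form of Lemma~\ref{root 1} valid for an arbitrary left factor. Writing $T'=T'\cdot e$ and combining $(x\cdot y)\circ z=x\cdot(y\circ z)$ with Lemma~\ref{root 1} applied to $u=e$, I obtain for any word $T'$ and any $a_1,\dots,a_k\in X$
$$
T'\circ(a_1\cdots a_k)=\sum_{1\leq i\leq k}(T'\cdot a_1\cdots\hat{a_i}\cdots a_k)\circ a_i-(k-1)(T'\cdot a_1\cdots a_k)\circ e.
$$
Now if some right factor satisfies $|u_{i,1}|_{_X}\geq 2$, I first use right commutativity to move it to the outermost position, writing $T=T''\circ(a_1\cdots a_k)$ with $k\geq 2$ and $T''$ a left-normed product of words in $[X]$. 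Applying the displayed identity and then absorbing each surviving $\cdot$-factor into the leftmost leaf $u_{1,2}$ through $(x\cdot y)\circ z=x\cdot(y\circ z)$, every summand is again a left-normed $\circ$-product of $[X]$-words whose outermost right factor is now a single element of $X\cup\{e\}$. Hence $M$ drops strictly while the shape is preserved, and the induction hypothesis finishes each summand.

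When no right factor is composite we are in the base case: $T=[v\circ c_1\circ\cdots\circ c_n]_{_L}$ with $v\in[X]$ and $c_1,\dots,c_n\in X\cup\{e\}$. I sort the bottom leaves by right commutativity so that $c_1\geq\cdots\geq c_n$, which is condition~(ii) of a \nt\ all of whose rows have length one. Then I split the leftmost leaf: writing $v=x_1\cdots x_p$ with $x_1\geq\cdots\geq x_p$ and peeling the factors off one at a time via $(x\cdot y)\circ z=x\cdot(y\circ z)$, I get
$$
T=(x_2\cdots x_p)\cdot[x_1\circ c_1\circ\cdots\circ c_n]_{_L},
$$
promoting the largest letter $x_1$ to the top leaf and demoting $x_2\cdots x_p$ to the commutative tail (the tail is empty, with top leaf $e$, when $v=e$). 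Because $x_1\geq x_2\geq\cdots\geq x_p$, the tail meets the ordering inequality demanded of a \npt, so each term is a \npt\ with $n$ rows of length one, with $|\,\cdot\,|_{\circ}=n$ and $|\,\cdot\,|_{_X}=|T|_{_X}$, and therefore with root number $n$.

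The reductions are routine; the step that needs care is the last one. Since every row has length one, the maximal-shape \npt\ carries its only distinguished $\circ$-leaf at the top, and the tail inequality of the \npt\ definition must be read against this top leaf. The substance of the argument is exactly the assertion that the canonical presentation is obtained by promoting the largest letter of the leftmost leaf and leaving the rest as a commutative tail. I expect the main obstacle to be this bookkeeping—checking that the split is consistent with the \nt\ and \npt\ ordering conditions in the degenerate, all-rows-length-one situation—rather than the algebra, since the invariants $|\,\cdot\,|_{\circ}$ and $|\,\cdot\,|_{_X}$ are preserved move-by-move and Lemma~\ref{prop n-1} already forbids the root number from dropping below the forced value $n$.
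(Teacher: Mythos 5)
Your proof is correct and takes essentially the same route as the paper's: both arguments use Lemma \ref{root 1} together with right commutativity and the compatibility identity $(x\cdot y)\circ z=x\cdot(y\circ z)$ to eliminate composite bottom leaves by absorbing their spare letters into the top leaf, and then finish by sorting and peeling all but the largest letter of the top leaf into the commutative tail. The only cosmetic differences are your choice of induction variable (total number of letters in the bottom leaves, versus the paper's induction on the largest index of a composite leaf after sorting) and your application of Lemma \ref{root 1} at the outermost position via the left-factor-generalized identity, rather than directly to $u_{1,2}\circ u_{1,1}$.
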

\begin{proof}
We may assume that $u_{1,1} \geq \cdots\geq  u_{n,1}$ by the deg-lex
order on $[X]$ and $u_{1,2}=b_1\cdots b_m,\  b_1\geq \cdots \geq b_m, \ b_1,\dots , b_m \in X, \ m\geq 0$.

Let $t=max\{i \mid |u_{i,1}|_{_X}>1, 1\leq i\leq n \}$. Induction on $t$.
If $t=0$ and $ m\leq 1$, then $T$ is a \npt.
If  $t=0$ and $  m>1$, then $T=b_2\cdots b_m[b_1\circ u_{1,1}\circ u_{2,1}\circ\cdots \circ u_{n,1}]_{_L}$ is a \npt.

If $t>1$ and $  u_{1,1}=a_1\cdots a_q\in [X]$, then
$T=[u_{1,2}\circ (a_1\cdots a_q)\circ u_{2,1}\circ\cdots \circ u_{n,1}]_{_L}
=\sum_{1\leq i\leq q}[(u_{1,2} a_1\cdots \hat{a_i}\cdots a_q)\circ a_i \circ u_{2,1}\circ\cdots \circ u_{n,1}]_{_L}
 -(q-1)[(u_{1,2} a_1\cdots a_n)\circ e\circ u_{2,1}\circ\cdots \circ u_{n,1}]_{_L}$.
By induction, the result follows.
\end{proof}

\begin{lemma}\label{generating set}
The set of  the \nptx\ over $X$ forms a linear generating set
 of  $GDNP(X)$.
\end{lemma}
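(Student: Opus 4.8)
The plan is to begin from Lemma~\ref{Novikov tableaux}, which already expresses an arbitrary word of $GDNP(X)$ as a linear combination of words $T_l=[u_{1,r_1+1}\circ A_1\circ\cdots\circ A_n]_{_L}$ with rows $A_i=[u_{i,r_i}\circ\cdots\circ u_{i,1}]_{_R}$ obeying exactly the ordering conditions (ii)--(iv) that a \npt\ imposes on its leaves. Hence the sole gap between such a $T_l$ and a genuine \npt\ is that each leaf $u_{i,j}$ is a monomial of $[X]$ rather than a single letter of $X\cup\{e\}$, and that no commutative factor has yet been separated off. It therefore suffices to show that every $T_l$ of this shape is a combination of \nptx, and I would prove this by induction on the \emph{defect} $|T|_{\circ}-r(T)$: every move below keeps $|T|_{\circ}$ fixed and can only raise the root number, so all error terms it creates have strictly smaller defect and fall under the induction hypothesis. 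When the defect is $0$, Lemma~\ref{prop n-1} forces $T_l=[u_1\circ\cdots\circ u_{n}]_{_L}$ to be a full left-normed row, and Lemma~\ref{root max} already rewrites it as a sum of \nptx; this is the base case.

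For the inductive step the heart of the matter is to convert every monomial leaf into a single letter, pushing the excess letters into a commutative factor. I would use two complementary devices. The innermost leaf $u_{i,1}$ of a row is the only one occupying a right-hand $\circ$-slot, and Lemma~\ref{root 1} splits it directly, turning it into single letters at the price of enlarging $u_{i,2}$ and creating an $e$-term; one checks (from the definition of $r$, or from Lemma~\ref{right commutative keep}) that this leaves the circle-tree shape, and hence the root number, untouched. Any other leaf $u_{i,j}$ I would first carry into the leftmost slot by the interchange Lemmas~\ref{interchange leaves} and~\ref{interchange two row leaves} (whose error terms have larger root number) and then strip of all but one of its letters by the exact identity $(x\cdot y)\circ z=x\cdot(y\circ z)$, which propagates a commutative factor out through the entire chain: $[c_1\cdots c_p\circ A_1\circ\cdots\circ A_n]_{_L}=(c_1\cdots c_{p-1})\cdot[c_p\circ A_1\circ\cdots\circ A_n]_{_L}$.

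To run the inner induction I would use the number $M$ of letters that still lie inside the circle-tree, i.e.\ $|T|_{_X}$ minus the length of the commutative prefactor. The pull-off identity strictly lowers $M$ whenever a tree leaf is a true monomial, whereas Lemma~\ref{root 1} keeps $M$ fixed but trades a stuck innermost monomial for a position-$\geq 2$ monomial that the pull-off step can then absorb; since $M$ is bounded below, the process halts with every tree leaf a single letter. At that point I would reorder, again through the interchange lemmas and a final pass of Lemma~\ref{Novikov tableaux} (which preserves the single-ness of leaves), so as to reinstate the tableau conditions and to slide the smallest generators into the commutative factor $b_1\cdots b_m$ with $a_{n,2}\geq b_1$; the remaining requirement ``$a_{n,2}=e\Rightarrow m=0$'' then holds automatically, since $a_{n,2}\geq b_1$ together with $e<x$ for all $x\in X$ rules out any $b_1$ once $a_{n,2}=e$.

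The step I expect to be the real obstacle is precisely the termination and bookkeeping of this splitting. Lemma~\ref{root 1} lowers no obvious size parameter --- it merely shifts letters from an innermost leaf into its neighbour --- so the argument rests on distinguishing the ``tree letters'' from the ``commutative letters'' and recognizing that it is the exact identity $(x\cdot y)\circ z=x\cdot(y\circ z)$ that actually makes progress, by draining letters out of the circle-tree. Interleaving it correctly with Lemma~\ref{root 1} and the interchange moves, while keeping the ordering of the leaves under control so that the final output is an honest \npt\ rather than a mere single-leaf word, is the delicate part; everything else is routine given the lemmas already established.
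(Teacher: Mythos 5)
Your proposal is correct and follows essentially the same route as the paper's proof: both pass to the standard form of Lemma~\ref{Novikov tableaux}, induct on the defect $|T|_{\circ}-r(T)$ with Lemmas~\ref{prop n-1} and~\ref{root max} as the base case, split innermost leaves $u_{i,1}$ by Lemma~\ref{root 1}, drain excess letters out of the leftmost slot via the identity $(x\cdot y)\circ z=x\cdot (y\circ z)$, and restore the tableau ordering with Lemmas~\ref{interchange leaves} and~\ref{interchange two row leaves}. The only difference is bookkeeping: the paper runs an inner induction on $|T|_{_X}$ and lets the deg-lex condition (iv) of Lemma~\ref{Novikov tableaux} ensure that all middle leaves are single letters as soon as the leftmost one is (so only the leftmost leaf ever needs stripping), whereas you explicitly swap each monomial middle leaf into the leftmost slot and track the count $M$ of letters inside the circle-tree; also note that your ``final pass of Lemma~\ref{Novikov tableaux}'' would re-merge the commutative prefix into the leftmost leaf, so that last reordering should be done purely with right commutativity and the interchange lemmas, as you in fact also indicate.
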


\begin{proof}
Induction on $|T|_{\circ}$. If $|T|_{\circ}=0$, then it is obvious.
Suppose that the result holds for any $T$ with $|T|_{\circ}<t$.
 Let $|T|_{\circ}=t\geq 1$.
 Induction on $|T|_{\circ}-r(T)$.

 If $|T|_{\circ}-r(T)=0$, then by Lemmas \ref{prop n-1} and
\ref{root max}, the result follows.

Suppose that the result holds for any $T$ with $|T|_{\circ}<t$, or $|T|_{\circ}=t,\ |T|_{\circ}-r(T)< p$. Let
$|T|_{\circ}=t,\ |T|_{\circ}-r(T)=p$. By Lemma \ref{Novikov tableaux}, we may assume that
$
T=[u_{1,r_1+1}\circ A_1 \circ A_{2}\circ \cdots \circ  A_{n}]_{_L}
$, where
$
A_{i}=[u_{i,  r_i}\circ \cdots \circ  u_{i,  2}\circ u_{i,  1}]_{_R}$   for all $ 1\leq i \leq n.
$

Suppose $u_{i,1}=c_1\cdots c_q,\  q\geq 2$ for some $1\leq i\leq n$. If $r_i=1$, then
 by Lemma \ref{root 1},  we have
\begin{eqnarray*}
T&=& [u_{1,r_1+1}\circ A_1 \circ A_{2}\circ \cdots \circ  A_{n}]_{_L}=[u_{1,r_1+1}\circ A_i \circ A_{1}\circ \cdots \circ \hat{ A_{i}}\circ \cdots \circ  A_{n}]_{_L}\\
&=&\sum_{1\leq l\leq q}[(u_{1,r_1+1}\cdot c_1\cdots \hat{c_l}\cdots c_q)\circ c_l )]_{_R}\circ A_1\circ \cdots \circ \hat{ A_{i}}\circ \cdots \circ  A_{n}]_{_L}\\
&&-(q-1)[(u_{1,r_1+1}c_1\cdots c_q)\circ e )]_{_R}\circ A_1\circ \cdots \circ \hat{ A_{i}}\circ \cdots \circ  A_{n}]_{_L}.
\end{eqnarray*}

If $r_i>1$, then
 by Lemma \ref{root 1},  we have
\begin{eqnarray*}
T&=& [u_{1,r_1+1}\circ A_1 \circ A_{2}\circ \cdots \circ  A_{n}]_{_L}=[u_{1,r_1+1}\circ A_i \circ A_{1}\circ \cdots \circ \hat{ A_{i}}\circ \cdots \circ  A_{n}]_{_L}\\
&=&\sum_{1\leq l\leq q}[u_{1,r_1+1}\circ [u_{i,r_i}\circ \cdots \circ u_{i,3}\circ (u_{i,2}\cdot c_1\cdots \hat{c_l}\cdots c_q)\circ c_l )]_{_R}\circ A_1\circ \cdots \circ \hat{ A_{i}}\circ \cdots \circ  A_{n}]_{_L}\\
&&-(q-1)[u_{1,r_1+1}\circ [u_{i,r_i}\circ \cdots \circ u_{i,3}\circ (u_{i,2}c_1\cdots c_q)\circ e )]_{_R}\circ A_1\circ \cdots \circ \hat{ A_{i}}\circ \cdots \circ  A_{n}]_{_L}.
\end{eqnarray*}
Repeating this process if there is some other $j$ such that $|u_{j,1}|_{_X}>1$. So by induction hypothesis and Lemma \ref{interchange two row leaves},  we may assume that $|u_{i,1}|_{_X}\leq 1$ for any $1\leq i\leq n$.

Induction on $|T|_{_X}$. If $|T|_{_X}=0$, then $T$ is a \npt\ over $\{e\}$.
Suppose that the result holds for any $T$ with $|T|_{\circ}<t$, or $|T|_{\circ}=t,\ |T|_{\circ}-r(T)< p$, or
$|T|_{\circ}=t,\ |T|_{\circ}-r(T)=p, |T|_{_X}<q$. Let $|T|_{\circ}=t, \ |T|_{\circ}-r(T)=p,\ |T|_{_X}=q$.

If $|u_{1,r_1+1}|_{_X}\leq 1$, then $T$ is already a \npt.
If $|u_{1,r_1+1}|_{_X}>1$, say $u_{1,r_1+1}=b_1\cdots b_m$,  then
$
T=b_2\cdots b_{m} \cdot [b_1 \circ A_1 \circ A_{2}\circ \cdots \circ  A_{n}]_{_L}
$.
By induction and Lemma \ref{interchange two row leaves}, the result follows.
\end{proof}

\begin{theorem}\label{linear independent}
The set of  the \nptx \ over $X$ forms a linear basis of $GDNP(X)$.
\end{theorem}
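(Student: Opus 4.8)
The plan is to transport the problem into the free \cca\ $C(X)$, where we already have the explicit linear basis $C[X]$ and its order, and to attach to each tableau a distinguished leading word. By Lemma~\ref{become novikov poisson}, $(C(X),\cdot,\circ)$ with $x\circ y=x\ast Dy$ is a \npa; hence the universal property of the free \npa\ $GDNP(X)$ yields a homomorphism $\phi:GDNP(X)\to C(X)$ determined by $\phi(x)=x$ for $x\in X$ (so $\phi(e)=e$). Since Lemma~\ref{generating set} already shows that the \nptx\ span $GDNP(X)$, it suffices to prove that the images $\phi(T)$ of pairwise distinct \nptx\ are linearly independent in $C(X)$: once their leading words in $C[X]$ are pairwise distinct (each with leading coefficient $1$), the elements $\phi(T)$ are linearly independent, and the linear independence of the tableaux, hence the basis property, follows immediately.

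The heart of the matter is to compute the leading word $\overline{\phi(T)}\in C[X]$ of a \npt\ $T=b_1\cdots b_m\cdot[a_{1,r_1+1}\circ A_1\circ\cdots\circ A_n]_{_L}$, where $A_i=[a_{i,r_i}\circ\cdots\circ a_{i,2}\circ a_{i,1}]_{_R}$. I would proceed by induction on the construction of $T$, using Lemma~\ref{keep D}: part~(i) says that $\ast$ and $\cdot$ preserve the order, so the leading word of a product is the product of the leading words (a single basis monomial, hence coefficient $1$ and no cancellation), while part~(ii) says that applying $D$ sends the leading word of a $\ast$-monomial to the word obtained by differentiating its largest factor. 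Since $\circ=\ast D$, evaluating a right-normed row turns $a_{i,1}$ into the factor of highest order, and each outer bracketing contributes one further derivative to the leaf $a_{i,1}$; the expected result is
\begin{equation*}
\overline{\phi(T)}=D^{r_1}a_{1,1}\ast\cdots\ast D^{r_n}a_{n,1}\ast\big(a_{1,r_1+1}\ast\cdots\ast a_{1,2}\ast a_{2,r_2}\ast\cdots\ast a_{n,2}\big)\cdot b_1\cdots b_m,
\end{equation*}
rewritten in the normal form of $C[X]$. Here the defining inequality $a_{n,2}\geq b_1\geq\cdots\geq b_m$ of a \npt\ is used decisively: it forces every $b_k$ to be smaller than every factor coming from $w$, so that under the normal-form reordering the letters $b_1,\dots,b_m$ occupy the $\cdot$-part while all of $\overline{\phi(w)}$ occupies the $\ast$-part.

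It then remains to see that $T\mapsto\overline{\phi(T)}$ is injective, which I would establish by reconstructing $T$ from $\overline{\phi(T)}$. The $\cdot$-part returns $u=b_1\cdots b_m$. The $\ast$-factors of positive order yield the multiset $\{(r_i,a_{i,1}):1\le i\le n\}$, and the defining inequalities $r_1\geq\cdots\geq r_n$ together with $a_{i,1}\geq a_{i+1,1}$ when $r_i=r_{i+1}$ force this multiset back into a unique admissible sequence of rows. Finally, condition~(iii) in the definition of a \nt\ states precisely that the order-zero leaves, read row by row, already form a decreasing sequence; hence sorting the order-zero $\ast$-factors and cutting the result into consecutive blocks of lengths $r_1,r_2-1,\dots,r_n-1$ recovers each $A_i$ together with the root $a_{1,r_1+1}$. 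Thus distinct \nptx\ have distinct leading words, and the theorem follows.

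I expect the leading-word computation to be the main obstacle. The delicate point is to verify, at each application of $D$ inside $\circ=\ast D$, that the factor of currently highest order is strictly larger than all the others, so that by Lemma~\ref{keep D}(ii) the new derivative always accumulates on the intended leaf $a_{i,1}$ rather than on an already differentiated factor; this is what makes the orders $r_1,\dots,r_n$ emerge cleanly. A useful simplification is that $D$ is here applied only to purely $\ast$-monomials, so the correction term $-(\cdots)\cdot De$ in the definition of $D$ on $kC[X]$ never contributes and no cancellation can occur. Once the explicit leading word is secured, both the injectivity and the linear independence are routine bookkeeping.
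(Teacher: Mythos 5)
Your proposal is correct and follows essentially the same route as the paper's proof: spanning via Lemma~\ref{generating set}, then mapping into $(C(X),\cdot,\circ)$ by the homomorphism fixing $X$ and using Lemma~\ref{keep D} to show each \npt\ has leading word $D^{r_1}a_{1,1}\ast\cdots\ast D^{r_n}a_{n,1}\ast a_{1,r_1+1}\ast\cdots\ast a_{n,2}\cdot b_1\cdots b_m$, so that distinct tableaux have distinct leading words and are therefore linearly independent. Your explicit reconstruction of $T$ from $\overline{\phi(T)}$ merely spells out the injectivity step that the paper asserts without detail.
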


\begin{proof}
By Lemma \ref{generating set}, it is enough to show that the set of the  \nptx\ over $X$ is  linear independent. By Lemma \ref{become novikov poisson},
$(C(X), \cdot, \circ)$ is a \npa. Now we  show that the \npa\  homomorphism
$\varphi:  \ (GDNP(X),\cdot, \circ) \longrightarrow(C(X), \cdot, \circ)$
induced by $\varphi(a)=a, a\in X$, is injective.
Given  any \npt\
$
T=b_1\cdots b_m[a_{1,r_1+1}\circ A_1 \circ \cdots \circ  A_{n}]_{_L},
$
where
$
 A_{i}=[a_{i,  r_i}\circ \cdots \circ  a_{i,  2}\circ a_{i,  1}]_{_R},    \  1\leq i \leq n,
$
by Lemma \ref{keep D}, we have
$\overline{\varphi(T)}=D^{r_{1}}a_{1,1}\ast D^{r_{2}}a_{2,1}\ast \cdots \ast D^{r_{n}}a_{n,1} \ast  a_{1,r_1+1}\ast\cdots
 \ast   a_{1,2}\ast a_{2,r_2}\ast\cdots \ast   a_{2,2}\ast \cdots \ast  a_{n,r_n} \ast \cdots \ast  a_{n,2}\cdot b_1\cdots b_m $.
 Suppose that $\sum_{1\leq i\leq t}\alpha_iT_i=0,$ where each $\alpha_i\in k, \ T_i $ is a \npt\ and
$T_i\neq T_j  $ for any $i\neq j, \ 1\leq i,j\leq t$.
 Then $\sum_{1\leq i\leq t}\alpha_i\varphi(T_i)=0$.
 Since $T_i\neq T_j \Rightarrow $
$\overline{\varphi(T_i)}\neq \overline{\varphi(T_j)}$, we have
$\alpha_i=0$ for any $1\leq i\leq t$.
\end{proof}

\section{A Poincar\'{e}-Birkhoff-Witt theorem}\label{section pbw np}
For any $S\subseteq C(X)$,  let us denote by $Id[S]$ the ideal of $C(X)$ generated by $S$ and
$
C(X|S):= C(X)/ Id[S]
$
the \cca\ generated by $X$ with defining relations $S$.
Noting that $w\ast D^{t}s=w\ast  D^{t}s\cdot e=w\ast e  \cdot  D^{t}s$,   we have
$$
Id[S]=span_{k}\{w \cdot D^{t}s\mid w\in C[X],\   j,t\in \mathbb{Z}_{\geq 0}, \  s\in S\}.
$$
By Lemma \ref{become novikov poisson}, $(C(X), \cdot, \circ)$
becomes a \npa\ with respect to
$
f\circ g=f \ast Dg,   \mbox{ for any }   f,  g\in C(X).
$

Recall that
$
  GDNP_{_0}(X)= span_{k}\{w\in C[X] \mid wt(w)=0\}
$
and $GDNP_{_0}(X)$ is  a  subalgebra of
 $(C(X), \cdot,  \circ)$ (as \npa).

\begin{lemma}\label{isomorphism}
Let $\varphi  :(GDNP(X),\cdot, \circ) \longrightarrow (GDNP_{_0}(X),\cdot, \circ)$ be the \npa\
 homomorphism induced by $\varphi(a)=a,\  a\in X$. Then $\varphi$ is an isomorphism.
\end{lemma}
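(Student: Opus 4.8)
The plan is to establish three things in turn: that $\varphi$ is a well-defined homomorphism whose image actually lands in $GDNP_{_0}(X)$, that it is injective, and that it is surjective. The first two are essentially in hand, so the real content is surjectivity, which I would obtain by comparing leading words and then running a triangularity induction.

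First I would record that $\varphi$ is well defined because $GDNP(X)$ is free: the set map $a\mapsto a$ extends uniquely to a \npa\ homomorphism. Since each generator $a\in X$ is a word with $wt(a)=0$ and $(GDNP_{_0}(X),\cdot,\circ)$ is a subalgebra of $(C(X),\cdot,\circ)$, the subalgebra of $C(X)$ generated by $X$ — which is exactly $\varphi(GDNP(X))$ — is contained in $GDNP_{_0}(X)$, so $\varphi$ indeed maps into $GDNP_{_0}(X)$. Injectivity is then immediate: the homomorphism $GDNP(X)\to C(X)$ was shown to be injective in the proof of Theorem \ref{linear independent}, and restricting the codomain to the subspace $GDNP_{_0}(X)$ changes nothing.

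The heart of the argument is surjectivity. Recall from the proof of Theorem \ref{linear independent} that for a \npt\ $T=b_1\cdots b_m[a_{1,r_1+1}\circ A_1\circ\cdots\circ A_n]_{_L}$ one has, via Lemma \ref{keep D},
\[
\overline{\varphi(T)}=D^{r_1}a_{1,1}\ast\cdots\ast D^{r_n}a_{n,1}\ast a_{1,r_1+1}\ast\cdots\ast a_{1,2}\ast a_{2,r_2}\ast\cdots\ast a_{n,2}\cdot b_1\cdots b_m ,
\]
and that distinct tableaux give distinct leading words. Counting the $\ast$-factors gives $|\overline{\varphi(T)}|_{\ast}=\sum_{i=1}^n r_i$, which equals the sum of the $D$-exponents, so $wt(\overline{\varphi(T)})=0$; thus every $\overline{\varphi(T)}$ is one of the basis words of $GDNP_{_0}(X)$. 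I would then prove that $T\mapsto\overline{\varphi(T)}$ is \emph{onto} $\{w\in C[X]:wt(w)=0\}$. Given such a $w=D^{i_1}a_1\ast\cdots\ast D^{i_j}a_j\cdot D^{i_{j+1}}a_{j+1}\cdots D^{i_n}a_n$, the relation $\sum_t i_t=j-1<j$ together with the decreasing order of the pairs $(i_t,a_t)$ forces every factor of the ``$\cdot$-part'' to have exponent $0$, and the constraint $(i_n,a_n)\neq(0,e)$ (when $j<n$) forces these to lie in $X$, so the $\cdot$-part reads off a monomial $b_1\cdots b_m\in[X]$. The positive-exponent factors of the ``$\ast$-part'' are read as the column bottoms $D^{r_i}a_{i,1}$, their exponents giving the heights $r_1\geq\cdots\geq r_n$, and the remaining zero-exponent $\ast$-factors — already in decreasing order — are distributed greedily, $r_1$ onto the first column and $r_i-1$ onto each later column, to supply the upper leaves. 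An exponent count shows the number available equals the number required, and the single decreasing chain conditions of the tableau definition say exactly that this greedy filling is legal; this yields a unique tableau $T_w$ with $\overline{\varphi(T_w)}=w$.

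Finally I would close with a triangularity argument. For each weight-$0$ word $w$, the tableau $T_w$ just built satisfies $\varphi(T_w)=w+\sum_l\beta_l w_l$, where each $w_l\in GDNP_{_0}(X)$ involves the same letters as $w$ and is strictly smaller than $w$ in the order on $C[X]$; since for a fixed letter-multiset and weight $0$ there are only finitely many such words, induction on this well-founded order shows $w=\varphi(T_w)-\sum_l\beta_l w_l\in\mathrm{Im}(\varphi)$. As these $w$ form a basis of $GDNP_{_0}(X)$, the map $\varphi$ is surjective, and combined with injectivity this gives that $\varphi$ is an isomorphism. The main obstacle is precisely the surjectivity of the leading-word map — the combinatorial reconstruction of a tableau from an arbitrary weight-$0$ word — since every other step reduces to facts already proved.
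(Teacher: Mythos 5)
Your proposal is correct and follows essentially the same route as the paper: reduce to showing that every weight-$0$ word of $C[X]$ lies in the image of $\varphi$, parse such a word into a tableau-shaped preimage whose image has that word as leading term (the paper's displayed decomposition of $w$ is exactly your ``greedy filling''), and conclude by triangular induction with respect to the order on $C[X]$. The differences are only expository: you make explicit the injectivity citation, the verification of the tableau conditions, and the well-foundedness of the induction (via finiteness of the set of weight-$0$ words with a fixed letter multiset), all of which the paper leaves implicit.
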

\begin{proof}
 It is clear that $\varphi(GDNP(X))\subseteq GDNP_{_0}(X)$. To show that
 $GDNP(X)\cong GDNP_{_0}(X)$, we only need to show that for any word  $w\in C[X]$ with $wt(w)=0$,
 we have $w\in \varphi(GDNP(X))$. Since $wt(w)=0$, we have
$$
w=D^{r_1} a_{1,1}\ast\cdots \ast D^{r_n}a_{n,1}\ast
 c\ast a_{1,r_1}\ast \cdots \ast a_{1,2}\ast\cdots \ast a_{n,r_n}\ast\cdots\ast a_{n,2}\cdot b_1\cdots b_m \in C[X]
$$
 for some $n\geq 0, m\geq 0, \ r_i\geq 1$,
 $ c, a_{i,j}\in X\cup\{e\},\  b_l\in X, \ 1\leq i\leq n,\  1\leq j\leq r_i,\ 1\leq l\leq m$.
Let $T=b_1\cdots b_m[c\circ A_1 \circ A_2 \circ \cdots\circ A_n]_{_L}$, where
$A_{i}=[a_{i,  r_i}\circ \cdots \circ a_{i,  2}\circ a_{i,  1}]_{_R},   \  \  1\leq i \leq n$. Then $\overline{\varphi(T)}=w $.

Induction on $w$. If $w=e$, then
$w=\varphi(e)\in \varphi(GDNP(X))$.
Suppose that  the result holds for any word $<w$.
Since $\overline{\varphi(T)}=w $, by induction we have
 $w-\varphi(T)\in \varphi(GDNP(X))$. Thus $w\in \varphi(GDNP(X))$.
\end{proof}

\begin{lemma}\label{Novikov ideal}
Let $S$ be a nonempty subset of $GDNP_{_0}(X)$ and $Id(S)$ be the  ideal of $(GDNP_{_0}(X),\cdot,\circ)$ generated by $S$. Then
$$
Id(S)=span_{k}\{w\cdot D^{t}s \mid w\in C[X], \  t\in \mathbb{Z}_{\geq 0},  \  s\in S,  \  wt(\overline{w\cdot D^{t}s})=0\}.
$$
\end{lemma}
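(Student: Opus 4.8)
The plan is to prove the two inclusions separately, after reinterpreting the right-hand side as the weight-homogeneous component of the associative ideal. Write $J$ for the right-hand side. Recall that $GDNP_{_0}(X)$ is precisely the weight-$0$ graded component of $C(X)$, and note that a direct check from the definitions gives the additivity relations $wt(w_1\cdot w_2)=wt(w_1)+wt(w_2)$, $wt(w_1\ast w_2)=wt(w_1)+wt(w_2)-1$ and $wt(Dw)=wt(w)+1$, hence $wt(w_1\circ w_2)=wt(w_1)+wt(w_2)$. In particular each generator $w\cdot D^{t}s$ is weight-homogeneous of weight $wt(w)+t$, so the constraint $wt(\overline{w\cdot D^{t}s})=0$ is exactly the requirement that $w\cdot D^{t}s\in GDNP_{_0}(X)$, i.e. $wt(w)=-t$. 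Since the ideal $Id[S]$ of the ambient \cca\ $C(X)$ equals $span_{k}\{w\cdot D^{t}s\}$ and is spanned by weight-homogeneous elements, $J$ is exactly its weight-$0$ component $Id[S]\cap GDNP_{_0}(X)$.

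For the inclusion $Id(S)\subseteq J$ I would check that $J$ is a \npa\ ideal of $GDNP_{_0}(X)$ containing $S$. It contains $S$ (take $w=e$, $t=0$) and is a subspace by construction. For $f\in GDNP_{_0}(X)$ and $g\in J\subseteq Id[S]$, the element $f\cdot g$ lies in $Id[S]$ because $Id[S]$ is closed under $\cdot$, while $f\circ g=f\ast Dg$ and $g\circ f=g\ast Df$ lie in $Id[S]$ because $Id[S]$ is closed under $\ast$ and $D$; since all three products have weight $0$, they lie in $J$. Hence $J$ is an ideal containing $S$, so $Id(S)\subseteq J$.

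The substantial direction is $J\subseteq Id(S)$, which I would prove by induction on $t$: for every $s\in S$ and every word $w\in C[X]$ with $wt(w)=-t$, the product $w\cdot D^{t}s$ lies in $Id(S)$. When $t=0$ we have $w\in GDNP_{_0}(X)$ and $w\cdot s\in Id(S)$ because $Id(S)$ is closed under $\cdot$. For $t\ge 1$ the key combinatorial observation is that $wt(w)=-t\le-1$ forces the $\ast$-part of $w$ to have at least two factors, the smallest of which carries no derivative (a short counting argument: if the smallest $\ast$-factor had positive derivative order then all of them would, making the derivative sum at least the number of $\ast$-factors, which contradicts $wt(w)<0$). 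Thus $w=a\ast w_{\sharp}$ for some $a\in X\cup\{e\}$ and some word $w_{\sharp}$ with $wt(w_{\sharp})=-(t-1)$. Using $(x\ast y)\cdot z=x\ast(y\cdot z)$ I rewrite $w\cdot D^{t}s=a\ast(w_{\sharp}\cdot D^{t}s)$, and then I extract this expression from $a\circ(w_{\sharp}\cdot D^{t-1}s)=a\ast D(w_{\sharp}\cdot D^{t-1}s)$ by expanding the Leibniz rule $D(w_{\sharp}\cdot D^{t-1}s)=Dw_{\sharp}\cdot D^{t-1}s+w_{\sharp}\cdot D^{t}s-w_{\sharp}\cdot D^{t-1}s\cdot De$, which presents $a\ast(w_{\sharp}\cdot D^{t}s)$ as $a\circ(w_{\sharp}\cdot D^{t-1}s)$ modulo two correction terms.

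It then remains to see that everything on the right stays in $Id(S)$. The term $a\circ(w_{\sharp}\cdot D^{t-1}s)$ lies in $Id(S)$ because $w_{\sharp}\cdot D^{t-1}s\in Id(S)$ by the inductive hypothesis (its weight is $0$, and I invoke the hypothesis in its linear form for the not-necessarily-canonical word $w_{\sharp}$) and $Id(S)$ is closed under $\circ$. For $a\ast(Dw_{\sharp}\cdot D^{t-1}s)$ I use $(x\ast y)\cdot z=x\ast(y\cdot z)$ to rewrite it as $(a\circ w_{\sharp})\cdot D^{t-1}s$; since $a\circ w_{\sharp}$ is a weight-$(-(t-1))$ element of $C(X)$, linearity and the inductive hypothesis put this in $Id(S)$. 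For $a\ast(w_{\sharp}\cdot D^{t-1}s\cdot De)$ I similarly rewrite it as $(a\circ e)\cdot(w_{\sharp}\cdot D^{t-1}s)$, where $a\circ e\in GDNP_{_0}(X)$ and $w_{\sharp}\cdot D^{t-1}s\in Id(S)$, so it lies in $Id(S)$ by closure under $\cdot$. Combining these gives $w\cdot D^{t}s\in Id(S)$, completing the induction. The main obstacle is precisely this conversion of the $\ast$-multiplications produced by $\circ$ into the $\cdot$-multiplications appearing in $J$; it is the weight-$0$ constraint that forces the derivatives and the $\ast$'s to pair up into $\circ$'s so that every intermediate expression remains inside $GDNP_{_0}(X)$.
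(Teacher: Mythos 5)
Your proof is correct, and while it rests on the same underlying mechanism as the paper's --- trading one derivative plus one derivative-free $\ast$-factor for an application of $\circ$ via $x\circ y=x\ast Dy$ --- it is organized in a genuinely different way. The paper first rewrites $w\cdot D^{t}s$ in an explicit sorted normal form
$D^{t}s\ast d_{1}\ast \cdots\ast d_{t}\ast D^{r_1}c_{1}\ast \cdots\ast D^{r_n}c_{n}\ast a_{1}\ast \cdots\ast a_{m}\cdot b_1\cdots b_l$
(the weight-$0$ condition guaranteeing exactly enough derivative-free $\ast$-factors), and then proves two separate claims, each by its own induction: (i) $D^{t}s\ast d_{1}\ast \cdots\ast d_{t}\in Id(S)$, absorbing the derivatives on $s$, and (ii) $f\ast D^{r}c\ast a_{1}\ast \cdots\ast a_{r-1}\in Id(S)$ for $f\in Id(S)$, absorbing the derivatives carried by the letters of $w$; both claims use the Leibniz rule for $D$ on $\ast$-products. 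You instead run a single induction on $t$, peel off just one derivative-free $\ast$-factor $a$ (your counting argument for its existence is the paper's normal-form observation in minimal form), and expand with the Leibniz rule on $\cdot$-products; the derivatives sitting on $w$ never need separate treatment, because your correction term $(a\circ w_{\sharp})\cdot D^{t-1}s$ is disposed of by applying the inductive hypothesis \emph{linearly} to the weight-homogeneous element $a\circ w_{\sharp}$. That reliance on homogeneity and linearity of $wt$ (which you rightly verify is additive for $\cdot$, $\ast$, $D$) is what buys you freedom from the paper's full normal-form bookkeeping and from its second claim (ii); conversely, the paper's version only ever invokes its claims on explicit canonical words, so it never needs the homogeneity discussion. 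Your identification of the right-hand side as $Id[S]\cap GDNP_{_0}(X)$ also makes the easy inclusion more transparent than the paper's ``it is clear.''
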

\begin{proof}
It is clear that the right part is a \npa\ ideal that contains $S$. We just need to show that $w\cdot D^{t}s \in Id(S)$
whenever $wt(\overline{w\cdot D^{t}s})=0$. Since $wt(\overline{w\cdot D^{t}s})=0$,
we have
$$ w\cdot D^{t}s=D^{t}s\ast d_{1}\ast \cdots\ast  d_{t}\ast D^{r_1}c_{1}\ast \cdots\ast  D^{r_n}c_{n}\ast a_{1}\ast \cdots\ast  a_{m}\cdot b_1\cdots b_l,
$$
where
$w=D^{r_1}c_{1}\ast \cdots\ast  D^{r_n}c_{n}\ast a_{1}\ast \cdots\ast  a_{m}\ast d_{1}\ast \cdots\ast  d_{t}\ast b_1\cdot b_2\cdots b_l, n\geq 0, m\geq 0, t\geq 0$,
$\ m=r_1+\cdots  +r_n-n$ and $ r_n\geq r_{n-1}\geq \dots  \geq r_1\geq 1.$
So the lemma will be clear if we show

(i)   $D^{t}s\ast d_{1}\ast \cdots\ast  d_{t}\in Id(S)$ whenever $s\in S$;

 (ii)   $ f \ast D^{r}c\ast a_{1}\ast \cdots\ast  a_{r-1}\in Id(S)$ whenever $f\in Id(S)$.

 To prove (i),    we use induction on $t$. If $t=0,  $ it is clear.
 Suppose that it holds for any $t\leq p$. Then
 \begin{eqnarray*}
&&D^{p+1}s\ast d_{1}\ast \cdots\ast  d_{p+1}\\
&=&d_{p+1}\ast D^{p+1}s\ast d_{1}\ast \cdots\ast d_{p}\\
&= &d_{p+1}\circ(D^{p}s\ast d_{1}\ast \cdots\ast   d_{p})
-d_{p+1}\ast \sum_{1\leq i \leq p}D^{p}s\ast d_{1}\ast \cdots\ast   Dd_{i}\ast \cdots\ast  d_{p}\\
&=&  d_{p+1}\circ(D^{p}s\ast d_{1}\ast \cdots\ast   d_{p})
-\sum_{1\leq i \leq p} (D^{p}s\ast d_{1}\ast \cdots\ast   d_{i-1}\ast d_{i+1}\cdots  d_{p+1})\circ d_{i}\in  Id(S).
 \end{eqnarray*}
To prove (ii),   we use induction on $r$.
If $r=1,  $ it is clear.
 Suppose that it holds for any $r\leq p$. Then
  \begin{eqnarray*}
&&f \ast D^{p+1}c\ast a_{1} \ast\cdots   \ast a_{p}\\
&=&f\circ (D^{p}c\ast a_{1} \ast\cdots   \ast a_{p})
- f \ast\sum_{1\leq i \leq p}D^{p}c\ast a_{1} \ast\cdots  \ast   Da_{i}  \ast \cdots \ast a_{p}\\
&=&f\circ (D^{p}c\ast a_{1} \ast\cdots   \ast a_{p})
- \sum_{1\leq i \leq p}(f \ast D^{p}c\ast a_{1} \ast\cdots  \ast   a_{i-1}  \ast   a_{i+1} \ast \cdots \ast a_{p})\circ a_i
\in  Id(S).
 \end{eqnarray*}
So $Id(S)=span_{k}\{w\cdot D^{t}s \mid w\in C[X], \  t\in \mathbb{Z}_{\geq 0},  \  s\in S,  \ wt(\overline{w\cdot D^{t}s})=0\}$.
\end{proof}

The following theorem is a Poincar\'{e}-Birkhoff-Witt theorem for \npas.

\begin{theorem}
Any \npa\ $GDNP(X|S)$ can be  embedded into
its universal enveloping \cca\ $C(X|S)$.
\end{theorem}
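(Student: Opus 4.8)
The plan is to realise both algebras concretely inside the free objects and then to reduce the embedding to a single identity of ideals, namely $Id[S]\cap GDNP_{_0}(X)=Id(S)$. First I would invoke Lemma \ref{isomorphism} to identify $GDNP(X)$ with the subalgebra $GDNP_{_0}(X)$ of $(C(X),\cdot,\circ)$ through the isomorphism $\varphi$, so that the defining set $S$ may be regarded simultaneously as a set of relations in $GDNP(X)$ and as a subset of $GDNP_{_0}(X)\subseteq C(X)$. Under this identification $GDNP(X|S)\cong GDNP_{_0}(X)/Id(S)$, where $Id(S)$ is the \npa\ ideal described in Lemma \ref{Novikov ideal}, while $C(X|S)=C(X)/Id[S]$. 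Since the generators of $Id(S)$ form a subset of the generators $w\cdot D^{t}s$ of $Id[S]$, we have $Id(S)\subseteq Id[S]$, so $\varphi$ descends to a well-defined \npa\ homomorphism $\overline{\varphi}:GDNP(X|S)\longrightarrow C(X|S)$.

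The heart of the matter is to prove $\overline{\varphi}$ injective, which amounts to the equality $Id[S]\cap GDNP_{_0}(X)=Id(S)$. To get this I would exhibit a $\mathbb{Z}$-grading on $C(X)$ given by the weight function: set $C(X)_k=span_{k}\{w\in C[X]\mid wt(w)=k\}$. A short computation on basis words, using the explicit product and $D$ formulas together with Lemma \ref{keep D}, shows that every term of a product or a derivative is again homogeneous: $\cdot$ is additive in weight, $\ast$ lowers weight by one, and $D$ raises it by one. Consequently the derived product $x\circ y=x\ast Dy$ satisfies $C(X)_k\circ C(X)_l\subseteq C(X)_{k+l}$, so $(C(X),\cdot,\circ)$ is $\mathbb{Z}$-graded with degree-zero component exactly $GDNP_{_0}(X)$.

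With the grading in hand, each generator $w\cdot D^{t}s$ of $Id[S]$ is homogeneous: as $s\in GDNP_{_0}(X)$ has weight $0$, the element $w\cdot D^{t}s$ lies in $C(X)_{wt(w)+t}$, and its weight equals $wt(\overline{w\cdot D^{t}s})$. Hence $Id[S]$ is a graded subspace, and its degree-zero part is cut out precisely by those generators with $wt(\overline{w\cdot D^{t}s})=0$; comparing with Lemma \ref{Novikov ideal}, this degree-zero part is exactly $Id(S)$. Therefore $Id[S]\cap GDNP_{_0}(X)=(Id[S])_0=Id(S)$. Finally, if $\overline{\varphi}(\overline{f})=0$ then $\varphi(f)\in Id[S]$, while $\varphi(f)\in GDNP_{_0}(X)$; hence $\varphi(f)\in Id[S]\cap GDNP_{_0}(X)=Id(S)$. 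Since $\varphi$ is an isomorphism onto $GDNP_{_0}(X)$ carrying the defining ideal of $GDNP(X|S)$ onto $Id(S)$, this forces $\overline{f}=0$, giving injectivity and hence the embedding.

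The main obstacle I anticipate is the grading verification. One must check that the weight is controlled not merely on leading words but on \emph{every} monomial produced by the product and derivation formulas of the preceding lemma, so that the three operations genuinely respect the grading and $Id[S]$ is honestly homogeneous. Once homogeneity of the generators $w\cdot D^{t}s$ is secured, the identification of the degree-zero piece of $Id[S]$ with $Id(S)$ is immediate from the two spanning-set descriptions, and the embedding then follows formally.
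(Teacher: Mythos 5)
Your proposal is correct and takes essentially the same route as the paper: identify $GDNP(X)$ with $GDNP_{_0}(X)$ via Lemma \ref{isomorphism}, reduce the embedding to the key equality $Id[\varphi S]\cap GDNP_{_0}(X)=Id(\varphi S)$ obtained from the spanning descriptions of $Id(S)$ (Lemma \ref{Novikov ideal}) and $Id[S]$, and conclude by the isomorphism theorems. The only difference is presentational: you make explicit the weight-grading of $C(X)$ (with $\cdot$ additive, $\ast$ lowering weight by one, $D$ raising it by one, hence $\circ$ additive) that justifies this equality, a homogeneity verification the paper leaves implicit in its chain $GDNP(X)/Id(S)\cong GDNP_{_0}(X)/(GDNP_{_0}(X)\cap Id[\varphi S])\leq C(X|\varphi S)$.
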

\begin{proof}
By Lemmas \ref{isomorphism} and \ref{Novikov ideal}, we get $GDNP(X)\cong GDNP_{_0}(X)$ and
 $$
 \frac{GDNP(X)}{Id(S)}\cong \frac{GDNP_{_0}(X)}{Id(\varphi S)}=\frac{GDNP_{_0}(X)}{GDNP_{_0}(X)\cap Id[\varphi S]}\cong \frac{GDNP_{_0}(X)+Id[\varphi S]}{Id[\varphi S]}
 \leq C(X|\varphi S) ,
 $$
 where $\varphi $ is defined in Lemma \ref{isomorphism}.
\end{proof}

\section{Differential GDN-Poisson algebras}

\begin{definition}  \emph{A} differential GDN-Poisson algebra \emph{$(\mathcal{A}, \cdot, \circ)$ is a  \npa\  satisfying the following identity}:
$$
x\circ (y\cdot z)=(x\circ y)\cdot z+(x\circ z)\cdot y \ \ \ \ \ \ \ \ \ (\lozenge)
$$
\end{definition}

Let $D GDNP(X)$  denote  a free differential GDN-Poisson algebra generated by a
well-ordered set $X$.  It is clear that
$$
D GDNP(X)=GDNP(X|\lozenge ):= \frac{ GDNP(X)}{ Id(\lozenge )},
$$
where $Id(\lozenge )$ is the ideal of $ GDNP(X)$ generated by $(\lozenge )$.
So for any \npa\ $\mathcal{A}$ generated by $X$, if $\mathcal{A}$ satisfies $(\lozenge )$, then
$\mathcal{A}\cong DGDNP(X|R)$ for some $R\subseteq DGDNP(X)$.
For any $x\in D GDNP(X)$, since  $x\circ  e=x\circ( e\cdot e)=x\circ  e+x\circ e$, we have $x\circ  e=0$.

For any word $T$ in  $F(\Omega, X\cup \{e\})$,  the number of $ e$ that appears in $T$ will be denoted by $|T|_{ e}$.
For any word $T\in D GDNP(X)$, we consider $T$ as a word in
$F(\Omega, X\cup \{e\})$. Then   $|T|_{e}$ also makes sense.
For example, let $a,b\in X, T=(((( e\cdot e)\circ a)\circ (a\circ (b\circ e)))\cdot e)\cdot e$. Then $|T|_{_X}=3,\  |T|_{ e}=5.$

Define
$$
[ e\circ  e\circ \cdots\circ  a]_i:= [\underbrace{ e\circ  e\circ \cdots\circ   e}_{i \mbox{ times }}\circ a]_{_R},\ i\geq 0, \ a\in X.
$$
We call $T$ a normal word if
$$
T=[ e\circ  e\circ \cdots \circ a_1]_{i_1}\cdot [ e\circ  e\circ \cdots \circ a_2]_{i_2}\cdots [ e\circ  e\circ \cdots \circ a_n]_{i_{n}}\  (T=e \mbox{ if } n=0),
$$
where $a_1, \dots , a_n\in X,\ (i_1,a_1)\geq\cdots \geq (i_n,a_n), n\in \mathbb{Z}_{\geq 0}$.

\begin{lemma}\label{1 normal}
For any normal word $T\in D GDNP(X)$, we have
$ e\circ T=\sum_{i}\alpha_{i}T_i$, where each $\alpha_i\in k$,  $T_i$ is normal with $|T_i|_{_X}=|T|_{_X}$.
\end{lemma}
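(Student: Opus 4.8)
The plan is to prove the statement by induction on the number $|T|_X$ of generators appearing in $T$, using the defining identity $(\lozenge)$ together with the already-established facts about the free GDN-Poisson algebra. First I would record the structure of the problem: since $T$ is a normal word, write $T = [e\circ\cdots\circ a_1]_{i_1}\cdots[e\circ\cdots\circ a_n]_{i_n}$ as a product of the ``column'' factors, and observe that $e\circ T$ must be expanded by repeatedly applying the Leibniz-type identity $(\lozenge)$, namely $x\circ(y\cdot z)=(x\circ y)\cdot z+(x\circ z)\cdot y$. The role of $e\circ$ here is crucial: because $x\circ e=0$ in $D GDNP(X)$, multiplying on the left by $e\circ(-)$ behaves like a derivation of the commutative product $\cdot$.

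First I would treat the base cases $n=0$ and $n=1$. For $n=0$ we have $T=e$, so $e\circ e=0$ and the claim is vacuous (empty sum). For $n=1$, $T=[e\circ\cdots\circ a]_{i}$ is a single column, and $e\circ T=[e\circ e\circ\cdots\circ a]_{i+1}$ is again (up to reordering via right commutativity, Lemma on GDN tableaux) expressible as a normal word with the same number of generators $|T|_X=1$; the point is only that applying one more $e\circ$ does not change $|T|_X$. For the inductive step, write $T=T'\cdot [e\circ\cdots\circ a_n]_{i_n}$ where $T'$ is the product of the first $n-1$ columns. Applying $(\lozenge)$ with $x=e$, $y=T'$, $z=[e\circ\cdots\circ a_n]_{i_n}$ gives
$$
e\circ T=(e\circ T')\cdot [e\circ\cdots\circ a_n]_{i_n}+(e\circ[e\circ\cdots\circ a_n]_{i_n})\cdot T'.
$$
By the induction hypothesis $e\circ T'=\sum_j\beta_j S_j$ with each $S_j$ normal and $|S_j|_X=|T'|_X=n-1$, and by the $n=1$ case $e\circ[e\circ\cdots\circ a_n]_{i_n}$ is a single normal word with one generator. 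Multiplying these normal words by the remaining column factors and re-sorting the columns into the required order $(i,a)$-decreasing (using right commutativity and the GDN-tableau normalization) yields a linear combination of normal words. The generator count is preserved at each step: $(e\circ T')\cdot[e\circ\cdots\circ a_n]_{i_n}$ has $|T'|_X+1=|T|_X$ generators, and likewise for the other summand, so every resulting normal word has exactly $|T|_X$ generators.

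\emph{The main obstacle} I anticipate is bookkeeping rather than conceptual: after applying $(\lozenge)$ and the induction hypotheses, the products of normal words need not themselves be in normal form, because the column factors may appear out of order and a product of two normal words is a concatenation of columns that must be re-sorted. The key technical point is that re-sorting the columns $[e\circ\cdots\circ a]_i$ into the canonical $(i,a)$-decreasing arrangement is legitimate: since $\cdot$ is commutative and associative, reordering the factors of a product costs nothing, so every concatenation of columns is already a scalar multiple (in fact equal) to a normal word with the same list of columns, hence the same $|T|_X$. Therefore no genuine rewriting of the ``inner'' GDN structure is needed beyond the free reordering of the commuting factors, and the invariance of $|T|_X$ follows directly from the fact that $(\lozenge)$ redistributes the generators among two summands without creating or destroying any.
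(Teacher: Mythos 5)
Your proof is correct and follows essentially the same route as the paper: induction on the number of column factors, peeling one column off the product, applying $(\lozenge)$ with $x=e$, and invoking the induction hypothesis together with the single-column case, with re-sorting of columns handled by commutativity of $\cdot$. The only (inessential) differences are that you split off the last column rather than the first and that you spell out the $n=0$ case and the re-sorting bookkeeping, which the paper leaves implicit.
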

\begin{proof}
Let $T=[ e\circ  e\circ \cdots \circ a_1]_{i_1}\cdots [ e\circ  e\circ \cdots \circ a_n]_{i_{n}}, n\in \mathbb{Z}_{\geq 0}$. Induction on $n$. If $n=1$, the result follows.
Let $n=t\geq 2$.
Then
$
 e\circ T=( e\circ[ e\circ  e\circ \cdots \circ a_1]_{i_1})\cdot [ e\circ e\circ \cdots \circ a_2]_{i_2}\cdots [ e\circ  e\circ \cdots \circ a_t]_{i_{t}}
+( e\circ([ e\circ  e\circ \cdots \circ a_2]_{i_2}\cdots [ e\circ  e\circ \cdots \circ a_t]_{i_{t}}))\cdot [ e\circ  e\circ \cdots \circ a_1]_{i_1}.
$
The result follows by induction.
\end{proof}
\begin{lemma}
For any word $T\in D GDNP(X)$, we have
$ T=\sum_{i}\alpha_{i}T_i$, where each $\alpha_i\in k$ and $T_i$ is normal.
\end{lemma}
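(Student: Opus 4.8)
The plan is to argue by induction on the number of occurrences of the operation symbols $\cdot$ and $\circ$ in $T$, using that every word of $D GDNP(X)$ is either a generator or of the form $T_1\cdot T_2$ or $T_1\circ T_2$ with $T_1,T_2$ strictly shorter. For the base case a generator $a\in X$ is already the normal word $[a]_0$, and the empty word is $e$ itself, so nothing is to be done. Two reduction steps then suffice, one for each binary operation.

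For the product case $T=T_1\cdot T_2$, the induction hypothesis gives $T_1=\sum_i\alpha_iP_i$ and $T_2=\sum_j\beta_jQ_j$ with all $P_i,Q_j$ normal, whence $T=\sum_{i,j}\alpha_i\beta_j\,P_i\cdot Q_j$. Each $P_i$ and each $Q_j$ is a $\cdot$-product of chains $[e\circ\cdots\circ e\circ a]_{k}$, so $P_i\cdot Q_j$ is again such a $\cdot$-product; since $\cdot$ is commutative and associative with unit $e$, I reorder the constituent chains so that their indices $(k,a)$ are non-increasing, obtaining a single normal word. This settles the $\cdot$ case.

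The heart of the matter is the case $T=T_1\circ T_2$. Here I first record the identity $x\circ z=x\cdot(e\circ z)$, which follows immediately from the compatibility law $(x\cdot y)\circ z=x\cdot(y\circ z)$ by taking $y=e$ and using $x\cdot e=x$. Writing $T_1=\sum_i\alpha_iP_i$ and $T_2=\sum_j\beta_jQ_j$ in normal form by induction, I obtain $T=\sum_{i,j}\alpha_i\beta_j\,(P_i\circ Q_j)=\sum_{i,j}\alpha_i\beta_j\,P_i\cdot(e\circ Q_j)$. Since each $Q_j$ is normal, Lemma \ref{1 normal} applies and expands $e\circ Q_j=\sum_k\gamma_k N_{jk}$ as a linear combination of normal words $N_{jk}$. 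Substituting, $T$ becomes a linear combination of products $P_i\cdot N_{jk}$ of two normal words, each of which reorders to a normal word exactly as in the product case.

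Thus the only genuinely new input is Lemma \ref{1 normal} together with the identity $x\circ z=x\cdot(e\circ z)$; everything else is the routine reordering of $\cdot$-products of chains. The step I expect to require the most care is confirming that the $\circ$ standing on the right of a normal word is completely absorbed by $e\circ(-)$ and Lemma \ref{1 normal}, so that no residual $\circ$ survives outside the scope of that lemma. This is exactly why I rewrite $P_i\circ Q_j$ as $P_i\cdot(e\circ Q_j)$ \emph{before} invoking the lemma, rather than trying to normalise the $\circ$ directly.
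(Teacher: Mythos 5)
Your proof is correct, and it rests on exactly the same two pillars as the paper's own argument: the identity $T_1\circ T_2=(T_1\cdot e)\circ T_2=T_1\cdot(e\circ T_2)$ and Lemma \ref{1 normal}, together with the fact (implicit in both arguments) that a $\cdot$-product of normal words reorders to a normal word. The difference is the induction scheme. The paper runs a lexicographic double induction on $(|T|_{_X},|T|_{e})$ and splits into three cases according to how the letters of $X$ are distributed between $T_1$ and $T_2$; it invokes Lemma \ref{1 normal} only in the degenerate case $|T_1|_{_X}=0$, where $T_1$ collapses to $0$ or $e$, and otherwise applies the induction hypothesis directly to the compound word $e\circ T_2$ (legitimate there because $|e\circ T_2|_{_X}=|T_2|_{_X}<n$ in an induction on the number of $X$-letters). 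You instead run a single structural induction on the word and invoke Lemma \ref{1 normal} uniformly in every $\circ$-case; this eliminates the case analysis entirely, and the recursion terminates for the simple reason that subwords are shorter, so your route is more elementary in its bookkeeping while proving the same statement. The only point worth adding a line for: one of your $Q_j$ may be $e$ itself (the $n=0$ normal word), a case the proof of Lemma \ref{1 normal} starts above; it is harmless because $e\circ e=0$, which follows from the identity $x\circ e=0$ recorded just after the definition of $(\lozenge)$.
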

\begin{proof}
Induction on $|T|_{_X}$. If $|T|_{_X}=0$, then it is clear that $T=0 $ or $ e$.
 Let $|T|_{_X}=n\geq 1$.
Now, induction on $|T|_{ e}$.

If  $|T|_{ e}=0$, then $T=T_1\cdot T_2$ or
$T=T_1\circ T_2=(T_1\cdot  e)\circ T_2=T_1\cdot ( e\circ T_2)$,  where $|T_1|_{_X}<n$, $| e\circ T_2|_{_X}=|T_2|_{_X}<n$. By induction
hypothesis, the result follows.
Suppose that the result holds for any $T$
 with $|T|_{_X}<n$, or $|T|_{_X}=n, |T|_{ e}<t.$
Let $|T|_{_X}=n, |T|_{ e}=t\geq 1$,
 $T=T_1\cdot T_2  $  or $T=T_1\circ T_2=T_1\cdot ( e\circ T_2)$.

 If $|T_1|_{_X}<n$ and $| e\circ T_2|_{_X}=|T_2|_{_X}<n$,
then the result follows by induction.
If  $|T_1|_{_X}=0$ and $|T_2|_{_X}=n$, then $T_1=0$ or $ e$, and $|T_2|_{ e}<t$. By induction,
we may assume that $T_2$ is normal.  Then
$T_1\cdot T_2=0$ or $T_1\cdot T_2=T_2$; $T_1\circ T_2=0$ or $T_1\circ T_2= e\circ T_2$. By Lemma \ref{1 normal}, the result follows.
If  $|T_1|_{_X}=n$ and $ |T_2|_{_X}=0$, then $T_2=0$ or $ e$, and $|T_1|_{ e}<t$.  Then
$T_1\cdot T_2=0$ or $T_1\cdot T_2=T_1$; $T_1\circ T_2=0$.  By induction,
the result follows.
\end{proof}
Let $k\{X\}$ be a free commutative associative differential algebra generated by a
well-ordered set $X$ ($De=0$) with unit  $ e$ and one linear derivation $D$. Then it is clear that the set $[D^{\omega}X]$ of  a linear basis of $k\{X\}$ consists of
 $$
T=D^{r_1}a_1\cdots D^{r_n}a_n  \ \ \ ( T= e \mbox{ if } n=0),
$$
where $a_1, \dots, a_n\in X$, each
$r_i\geq 0$, $(r_1,a_1)\geq \cdots\geq (r_n,a_n)$.

Let $(\mathcal{A},\cdot, D)$ be a commutative associative differential algebra with one derivation $D$.
 Define the operation $\circ$ on $\mathcal{A}$ by
 $
x\circ y:= xDy, \  x,y \in \mathcal{A}.
$
Then $(\mathcal{A},\circ)$ forms a \npa\ (\cite{Xiaoping Xu Poisson}).
Moreover, for any $x,y,z\in \mathcal{A}$, we have
$x\circ(y z)=xD(yz)=x(Dy)z+xyDz=(x\circ y)z+(x\circ z) y.$

\begin{lemma}\label{DNP basis}
The set of  normal words forms a linear basis of $D GDNP(X)$.
In particular, $(D GDNP(X),\cdot,\circ)\cong (k\{X\},\cdot,\circ)$ as \npas,
where $\circ$ in $k\{X\}$ is defined as
$f\circ g=fDg $ for any $f,g\in k\{X\}$, and  $(D GDNP(X), \cdot, \partial)\cong (k\{X\}, \cdot, D)$ as
 commutative associative differential algebras,
 where $\partial$ in $D GDNP(X)$ is defined as $\partial(f)= e\circ f $ for any $f\in D GDNP(X)$.
\end{lemma}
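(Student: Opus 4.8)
The plan is to route everything through a single \npa\ homomorphism into the concrete model $k\{X\}$, and to show that this map carries the set of normal words (already known to span, by the preceding lemma) bijectively onto the known basis $[D^{\omega}X]$ of $k\{X\}$; all three assertions then drop out simultaneously. First I would record, as noted just before the statement, that $(k\{X\},\cdot,\circ)$ with $f\circ g:=fDg$ is a \npa\ satisfying $(\lozenge)$, and that it is generated by $X$: since $e\circ a=Da$, iterating gives every $D^{i}a$, and $\cdot$ produces all monomials $D^{r_1}a_1\cdots D^{r_n}a_n$. Because $DGDNP(X)=GDNP(X\mid\lozenge)$ is free in the class of \npas\ satisfying $(\lozenge)$, the assignment $a\mapsto a$ extends uniquely to a \npa\ homomorphism $\psi:DGDNP(X)\to k\{X\}$ (the relation $(\lozenge)$ is killed precisely because $k\{X\}$ satisfies it).

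Next I would compute $\psi$ on the building blocks $[e\circ\cdots\circ a]_i$. By induction on $i$, using $[e\circ\cdots\circ a]_i=e\circ[e\circ\cdots\circ a]_{i-1}$ together with $e\circ D^{i-1}a=e\cdot D(D^{i-1}a)=D^{i}a$, one obtains $\psi([e\circ\cdots\circ a]_i)=D^{i}a$. Hence for a normal word $T=[e\circ\cdots\circ a_1]_{i_1}\cdots[e\circ\cdots\circ a_n]_{i_n}$ we get $\psi(T)=D^{i_1}a_1\cdots D^{i_n}a_n$, and the ordering condition $(i_1,a_1)\geq\cdots\geq(i_n,a_n)$ that defines normal words is exactly the one defining the basis $[D^{\omega}X]$. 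Thus $\psi$ restricts to a bijection from the set of normal words onto $[D^{\omega}X]$. Since normal words span $DGDNP(X)$ by the preceding lemma and their $\psi$-images are linearly independent in $k\{X\}$, the normal words must be linearly independent; any nontrivial relation among them would map to a nontrivial relation among basis elements of $k\{X\}$. This proves that normal words form a basis, and, $\psi$ sending a basis to a basis, that $\psi$ is a linear isomorphism, hence a \npa\ isomorphism $(DGDNP(X),\cdot,\circ)\cong(k\{X\},\cdot,\circ)$.

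Finally, for the differential structure I would verify that $\partial(f):=e\circ f$ is a derivation of $(DGDNP(X),\cdot)$: by $(\lozenge)$ one has $e\circ(x\cdot y)=(e\circ x)\cdot y+(e\circ y)\cdot x$, and $\partial e=e\circ e=0$ since $x\circ e=0$ for all $x$. As $\psi(e\circ f)=e\circ\psi(f)=D\psi(f)$, we get $\psi\circ\partial=D\circ\psi$, so the isomorphism $\psi$ of commutative associative algebras also intertwines the two derivations, yielding $(DGDNP(X),\cdot,\partial)\cong(k\{X\},\cdot,D)$. The genuinely substantive input is the spanning of $DGDNP(X)$ by normal words, which the preceding lemma already supplies; the only remaining obstacle is the careful bookkeeping showing that $\psi$ is a \emph{bijection} on normal forms, i.e.\ that the computation $\psi([e\circ\cdots\circ a]_i)=D^{i}a$ together with the matching of the two orderings really identifies the index sets. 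Once that identification is in place, linear independence and both isomorphisms are immediate.
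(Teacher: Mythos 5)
Your proposal is correct and follows essentially the same route as the paper: both construct the homomorphism $D GDNP(X)\to k\{X\}$ induced by $a\mapsto a$ (well-defined since $k\{X\}$ satisfies $(\lozenge)$), observe that it carries the spanning set of normal words bijectively onto the basis $[D^{\omega}X]$, and then check compatibility of $\partial$ with $D$. You merely make explicit the bookkeeping (the computation $\psi([e\circ\cdots\circ a]_i)=D^i a$ and the matching of orderings) that the paper compresses into ``it is clear that $\theta$ is an isomorphism.''
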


\begin{proof}
Let $\theta:D GDNP(X)\longrightarrow k\{X\}$ be a \npa\ homomorphism induced by $\theta(a)=a,  a\in X$.
Then  it is clear that $\theta$ is an isomorphism.
So the set $\theta^{-1}([D^{\omega}X])$ of  normal words forms a linear basis of $D GDNP(X)$.

It is clear that  $(D GDNP(X), \cdot , \partial)$ forms a commutative  associative differential algebra. Moreover,
$\theta(\partial(u v))
=\theta( e\circ(u v))= \theta e\circ\theta(u v)
=e\circ(u v)=D(uv)=D(u)v+uD(v)=\theta(( e\circ u) v)+\theta(( e\circ v) u)
=\theta((\partial u)v+(\partial v)u)$,
so $\theta$ is a commutative  associative differential algebra isomorphism.
\end{proof}

\begin{theorem}\label{NP differential iso}
Let $\theta:D GDNP(X)\longrightarrow k\{X\}$ be a \npa\ homomorphism induced by $\theta(a)=a,  a\in X$.
Then any \npa\ $GDNP(X|S)$ satisfying $(\lozenge )$ is isomorphic to $k\{X|\theta(S)\}$ both
as  \npas\ and as commutative associative  differential algebras.
\end{theorem}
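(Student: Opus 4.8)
The plan is to reduce everything to the isomorphism $\theta$ of Lemma \ref{DNP basis}, together with the observation that inside a \npa\ with unit $e$ satisfying $(\lozenge)$ the \npa-ideals and the differential ideals coincide. First I would invoke the remark made just after the definition of a differential \npa: since $GDNP(X|S)$ satisfies $(\lozenge)$, we may take $S\subseteq DGDNP(X)$ and write $GDNP(X|S)=DGDNP(X)/Id(S)$, where $Id(S)$ is the \npa-ideal generated by $S$ and $\theta(S)$ denotes the image of $S$ under $\theta$. It therefore suffices to produce an isomorphism $DGDNP(X)/Id(S)\cong k\{X|\theta(S)\}$ that is simultaneously a homomorphism of \npas\ and of commutative associative differential algebras.

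The key step is an ideal-coincidence lemma. In $DGDNP(X)$ set $\partial x=e\circ x$. Applying the compatibility $(x\cdot y)\circ z=x\cdot(y\circ z)$ with $y=e$ gives $x\circ y=x\cdot\partial y$, and $(\lozenge)$ gives $\partial(x\cdot y)=(e\circ x)\cdot y+(e\circ y)\cdot x=(\partial x)\cdot y+x\cdot(\partial y)$, so $\partial$ is a derivation of $(DGDNP(X),\cdot)$. Hence a subspace $I$ is a \npa-ideal if and only if it is a differential ideal: if $I$ is $\circ$-closed then $\partial x=e\circ x\in I$ for every $x\in I$, so $I$ is $\partial$-stable; conversely, if $I$ is a $\cdot$-ideal that is $\partial$-stable, then $x\circ y=x\cdot\partial y$ shows $I$ is $\circ$-closed in both arguments. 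Applying this to $S$ gives $Id(S)=Id_{\mathrm{diff}}(S)$, the differential ideal generated by $S$; the identical argument in $k\{X\}$ (where $f\circ g=fDg$ and $D$ is the derivation) shows that the \npa-ideal and the differential ideal generated by $\theta(S)$ likewise coincide.

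Finally I would transport along $\theta$. By Lemma \ref{DNP basis}, $\theta\colon DGDNP(X)\to k\{X\}$ is an isomorphism both of \npas\ and of commutative associative differential algebras, carrying $\partial$ to $D$; consequently it carries $Id(S)$ onto the \npa-ideal generated by $\theta(S)$, which by the previous paragraph equals the defining differential ideal of $k\{X|\theta(S)\}$. Thus $\theta$ descends to a bijection of quotients $DGDNP(X)/Id(S)\to k\{X\}/Id(\theta(S))=k\{X|\theta(S)\}$, and being induced by a map that respects both structures it is at once a \npa\ isomorphism and a commutative associative differential algebra isomorphism. Together with the first paragraph this proves the theorem. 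The only genuine work is the ideal-coincidence step: once $\circ$ is rewritten as $x\circ y=x\cdot\partial y$ and $(\lozenge)$ is recognized as making $\partial$ a derivation, everything else is routine transport of structure along the already-established isomorphism $\theta$, so I expect that rewriting-and-derivation check to be the main (though mild) obstacle.
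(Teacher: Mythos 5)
Your proof is correct and follows the same route as the paper: reduce to $DGDNP(X)$ via the remark after the definition of a differential \npa, show that the \npa-ideal and the differential ideal generated by the relations coincide, and transport everything along the isomorphism $\theta$ of Lemma \ref{DNP basis}. The only point of divergence is how the ideal coincidence is established: the paper verifies it concretely and only in $k\{X\}$, writing the differential ideal as $Id[\theta(S)]=\{\sum\alpha_i u_i D^{r_i}(\theta s_i)\mid \alpha_i\in k,\ u_i\in k\{X\},\ s_i\in S\}$ and using $D^{r}g=[e\circ e\circ\cdots\circ e\circ g]_{_R}$ to compare it with the \npa-ideal, whereas you prove the sharper structural fact that in any \npa\ with unit satisfying $(\lozenge)$ the \emph{classes} of \npa-ideals and differential ideals coincide (via $\partial y=e\circ y$ and $x\circ y=x\cdot\partial y$), so the two generated ideals agree automatically in both $DGDNP(X)$ and $k\{X\}$; this is marginally cleaner, avoids the spanning-set computation, and makes the descent of both structures to the quotients transparent, but it rests on exactly the same mechanism of interdefinability of $\circ$ and the derivation through $e$.
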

\begin{proof}
Let $Id(\theta (S))$ be the ideal of  $(k\{X\}, \cdot,\circ)$ as \npa\ and $Id[\theta (S)]$ the
ideal of $(k\{X\},\cdot, D )$  as commutative associative  differential algebra. It is clear that
$Id[\theta (S)]=\{\sum\alpha_iu_i D^{r_i} (\theta s_i)\mid \alpha_i\in k, u_i \in k\{X\}, s_i\in S \}.$
Since $ D^{r}g= [\underbrace{ e\circ  e\circ \cdots\circ   e}_{r \mbox{ times }}\circ g]_{_R}$ for any $g \in k\{X\}$, it is straightforward to show that  $Id(\theta (S))=Id[\theta (S)]$.
 By Lemma \ref{DNP basis}, we have
 $$
 GDNP(X|S)= \frac{ GDNP(X)}{Id(S)}\cong \frac{\theta(D GDNP(X))}{Id(\theta(S))}=\frac{k\{X\}}{Id[\theta (S)]}= k\{X| \theta(S)\}.
 $$
\end{proof}
\begin{corollary}
$(\lozenge )$ holds in a \npa\ $\mathcal{A}$ if and only if $\mathcal{A}$ is embeddable into a commutative associative differential
algebra $\mathcal{B}$ with one derivation $D$, where
$\circ$ is defined as $f\circ g=fDg$ for any $f,g\in \mathcal{B}$.
\end{corollary}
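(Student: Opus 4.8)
The plan is to prove the two implications separately, noting in advance that essentially all the mathematical content of the forward direction is already supplied by Theorem \ref{NP differential iso}, so that the corollary is largely a repackaging of that result.

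For the easy direction, I would suppose $\mathcal{A}$ embeds via an injective \npa\ homomorphism $\iota$ into a commutative associative differential algebra $\mathcal{B}$ on which $\circ$ is given by $f\circ g=fDg$. First I would verify that $(\lozenge)$ holds identically in $\mathcal{B}$: for $f,g,h\in\mathcal{B}$, the Leibniz rule together with commutativity and associativity gives $f\circ(g\cdot h)=fD(g\cdot h)=f(Dg)h+fg(Dh)=(f\circ g)\cdot h+(f\circ h)\cdot g$, which is exactly the computation recorded in the remark preceding Lemma \ref{DNP basis}. Since $\iota$ preserves both $\cdot$ and $\circ$ and is injective, evaluating $(\lozenge)$ on the images $\iota(x),\iota(y),\iota(z)$ and cancelling $\iota$ by injectivity forces $(\lozenge)$ to hold in $\mathcal{A}$.

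For the converse, assume $(\lozenge)$ holds in $\mathcal{A}$. I would choose any generating set of $\mathcal{A}$ and well-order it to obtain $X$; the canonical surjection $GDNP(X)\longrightarrow\mathcal{A}$ then kills $Id(\lozenge)$, so that $\mathcal{A}\cong GDNP(X|S)$ for some $S$ containing the relations $(\lozenge)$. At this point Theorem \ref{NP differential iso} applies directly and yields an isomorphism $\mathcal{A}\cong k\{X|\theta(S)\}$ of \npas, where $k\{X|\theta(S)\}$ is a quotient of the free commutative associative differential algebra by a differential ideal and therefore is again a commutative associative differential algebra, whose $\circ$ is by construction $f\circ g=fDg$. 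Thus $\mathcal{A}$ is not merely embeddable but in fact isomorphic, as a \npa, to a commutative associative differential algebra with $\circ=fDg$; taking $\mathcal{B}=k\{X|\theta(S)\}$ and the isomorphism itself as the embedding completes this direction.

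The point to get right is bookkeeping rather than mathematics: one must ensure the chosen presentation $GDNP(X|S)$ genuinely captures $\mathcal{A}$ as a \npa, so that the well-ordering needed by the free-algebra constructions of Sections \ref{section free np} and \ref{section pbw np} is available, and that the transported operation $\circ$ on $k\{X|\theta(S)\}$ is literally $fDg$, which is guaranteed by Lemma \ref{DNP basis}. I expect no serious obstacle to remain, since Theorem \ref{NP differential iso} already packages the hard content, namely the PBW-type computation that a \npa\ satisfying $(\lozenge)$ collapses onto a commutative associative differential algebra, and the corollary merely restates that fact in the language of embeddings.
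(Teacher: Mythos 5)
Your proposal is correct and takes essentially the same approach as the paper: the forward direction invokes Theorem \ref{NP differential iso} (the presentation of $\mathcal{A}$ as a quotient $GDNP(X|S)$ that you spell out is exactly what the paper records in the remark at the start of Section 4), and the converse is the same Leibniz-rule computation transported through the injective homomorphism and cancelled by injectivity.
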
\
\begin{proof}
If $(\lozenge )$ holds in $\mathcal{A}$, then by Theorem \ref{NP differential iso}, $\mathcal{A}$ can be embedded into a commutative associative differential
algebra. Conversely, assume that $\theta: \mathcal{A}\longrightarrow \mathcal{B}$ is an embedding. Then
for any $x,y,z\in \mathcal{A}$, we have
$\theta( x\circ (y z))=\theta(x)\circ (\theta(y) \theta(z))=\theta(x)D(\theta(y)\theta(z))=(\theta(x)D\theta(y)) \theta(z)+(\theta(x)D\theta(z)) \theta(y)=(\theta(x)\circ\theta(y)) \theta(z)+(\theta(x)\circ \theta(z))\theta(y)=\theta ((x\circ y) z+(x\circ z) y)$.  Since $\theta$ is injective,  $(\lozenge )$ holds in $\mathcal{A}$.
\end{proof}
\section*{Acknowledgment} We would like to thank the  referee for  valuable suggestions.

\end{document}